\DeclareMathOperator{\GL}{GL}
\DeclareMathOperator{\gl}{\mathfrak{gl}}
\DeclareMathOperator{\upS}{S}
\let\sl\relax
\DeclareMathOperator{\sl}{\mathfrak{sl}}
\DeclareMathOperator{\SO}{SO}
\DeclareMathOperator{\so}{\mathfrak{so}}
\DeclareMathOperator{\SU}{SU}
\DeclareMathOperator{\su}{\mathfrak{su}}
\DeclareMathOperator{\upU}{U}
\DeclareMathOperator{\Skew}{Skew}
\newcommand{\fraka}{\mathfrak{a}}
\newcommand{\frake}{\mathfrak{e}}
\newcommand{\frakg}{\mathfrak{g}}
\newcommand{\frakk}{\mathfrak{k}}
\newcommand{\frakl}{\mathfrak{l}}
\newcommand{\frakm}{\mathfrak{m}}
\newcommand{\frakn}{\mathfrak{n}}
\newcommand{\frakp}{\mathfrak{p}}
\newcommand{\frakq}{\mathfrak{q}}
\newcommand{\fraks}{\mathfrak{s}}
\newcommand{\frakt}{\mathfrak{t}}
\newcommand{\fraku}{\mathfrak{u}}
\newcommand{\frakz}{\mathfrak{z}}
\newcommand{\CC}{\mathbb{C}}
\newcommand{\NN}{\mathbb{N}}
\newcommand{\RR}{\mathbb{R}}
\newcommand{\VV}{\mathbb{V}}
\newcommand{\ZZ}{\mathbb{Z}}
\newcommand{\0}{\textbf{0}}
\renewcommand{\1}{\textbf{1}}
\DeclareMathOperator{\tr}{tr}
\DeclareMathOperator{\ad}{ad}
\DeclareMathOperator{\Sym}{Sym}
\DeclareMathOperator{\Res}{Res}
\DeclareMathOperator{\id}{id}
\DeclareMathOperator{\sgn}{sgn}
\DeclareMathOperator{\diag}{diag}
\newcommand{\even}{\textup{even}}
\newcommand{\ev}{\textup{ev}}
\theoremstyle{plain}
\newtheorem{theorem}{Theorem}[section]
\newtheorem{proposition}[theorem]{Proposition}
\newtheorem{lemma}[theorem]{Lemma}
\newtheorem{thmalph}{Theorem}
\theoremstyle{definition}
\newtheorem{remark}[theorem]{Remark}
\newtheorem{remalph}[thmalph]{Remark}
\numberwithin{equation}{section}
\title[Appendix]{Appendix to: ``Holomorphic torsion and geometric zeta functions
for certain Hermitian locally symmetric manifolds''}
\author{Jan Frahm}
\address{Department of Mathematics, Ny Munkegade 118. 8000 Aarhus C Denmark}
\email{frahm@math.au.dk}
\begin{document}



\maketitle


Let $G$ be a connected simple Hermitian Lie group with finite center. Motivated by \cite{MS} we assume that $G$ has only one conjugacy class of cuspidal maximal parabolic subgroups and let $Q=MAN$ be the Langlands decomposition of such a parabolic. Choose a maximal compact subgroup $K\subseteq G$ such that $K_M=M\cap K$ is maximal compact in $M$. We write $\frakg$, $\frakk$, $\frakm$ and $\frakk_\frakm$ for the Lie algebras of $G$, $K$, $M$ and $K_M$. A classification of all possible $\frakg$ is given in Table~\ref{tb:MaxCuspParabolics}.

\begin{table}[h]
	\begin{tabular}{llll}
		\hline
		$\frakg$ & $\frakk$ & $\frakm$ & $\frakk_\frakm$\\
		\hline\hline
		$\su(m,n)$ & $\fraks(\fraku(m)\oplus\fraku(n))$ & $\fraku(m-1,n-1)$ & $\fraku(m-1)\oplus\fraku(n-1)$\\
		$\so^*(2n)$ & $\fraku(n)$ & $\so^*(2n-4)\oplus\su(2)$ & $\fraku(n-2)\oplus\su(2)$\\
		$\so(2,2n)$ & $\so(2)\oplus\so(2n)$ & $\sl(2,\RR)\oplus\so(2n-2)$ & $\so(2)\oplus\so(2n-2)$\\
		$\frake_{6(-14)}$ & $\so(10)\oplus\fraku(1)$ & $\su(1,5)$ & $\fraku(5)$\\
		$\frake_{7(-25)}$ & $\frake_6\oplus\fraku(1)$ & $\so(2,10)$ & $\so(2)\oplus\so(10)$\\
		\hline\vspace{-3mm}
  \end{tabular}
  \caption{Simple Hermitian Lie algebras with a unique conjugacy class of maximal cuspidal parabolic subalgebras}
  \label{tb:MaxCuspParabolics}
\end{table}

Write $K^0(K)$ resp. $K^0(K_M)$ for the Grothendieck group of representations of $K$ resp. $K_M$, and let
$$ \Res:K^0(K)\to K^0(K_M) $$
be the map restricting representations of $K$ to $K_M$. The image of the restriction map is denoted by $\Res(K^0(K))$.

In what follows we use the notation introduced in \cite{MS}. In particular, we consider a holomorphic, Hermitian, homogeneous vector bundle $\VV_\lambda$ over a compact Hermitian locally symmetric manifold $X=\Gamma\backslash G/K$ associated with an irreducible, unitary representation $V_\lambda$ of $K$ of highest weight $\lambda$. The regularity assumptions on $\lambda$ stated in \cite{MS} at the beginning of \S1 are irrelevant for our considerations, so we omit them.

In \cite[page 46]{MS} the Ansatz is used that for every $c\in\RR$ the virtual representation
\begin{equation}
 \Big(\sum_j(-1)^j\Lambda^j\tilde{\frakp}_+^{[\ev]}\Big)\otimes\Big(\sum_{\substack{w\in W_\kappa\\\hat{c}_{\lambda_w}^2=c^2}}(-1)^{\ell(w)}W_{\lambda_w}\Big)\label{eq:AnsatzSum}\tag{$\star$}
\end{equation}
of $K_M$ is contained in $\Res(K^0(K))$. We say that $\lambda$ is a \textit{good} highest weight if this is true.

\begin{thmalph}\label{thm:AnsatzCaseByCase}
\begin{enumerate}
\item\label{thm:AnsatzCaseByCase1} For $G=\SU(1,n)$, $n\geq1$, the restriction map $\Res:K^0(K)\to K^0(K_M)$ is surjective. In particular, every highest weight is good.
\item\label{thm:AnsatzCaseByCase2} For $G=\SO_0(2,2n)$, $n\geq2$, a highest weight $\lambda$ is good if and only if either $\lambda(H_0)=n\sqrt{-1}$ or $V_\lambda|_{\SO(2n)}$ is self-dual.
\item\label{thm:AnsatzCaseByCase3} For $G=\SU(2,3)$ no highest weight is good.
\end{enumerate}
\end{thmalph}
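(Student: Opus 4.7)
The plan is to treat the three parts separately; each reduces to an explicit analysis of the restriction map in highest-weight coordinates combined with expansion of the Ansatz.

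\smallskip

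\emph{Part (\ref{thm:AnsatzCaseByCase1}).} For $\frakg=\su(1,n)$, one has $K=S(U(1)\times U(n))\cong U(n)$ via $(z,A)\mapsto A$, and $K_M=U(n-1)$ sits as the standard block subgroup. The Gelfand--Tsetlin branching $V_\alpha|_{U(n-1)}=\bigoplus_{\mu\text{ interlacing }\alpha}V_\mu$ is multiplicity-free, and after ordering highest weights compatibly (e.g.\ reverse lexicographically) the restriction matrix is unitriangular over $\ZZ$. Hence $\Res$ is surjective, and \eqref{eq:AnsatzSum} lies in the image for every $\lambda$.

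\smallskip

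\emph{Part (\ref{thm:AnsatzCaseByCase2}).} For $\frakg=\so(2,2n)$, the groups $K=\SO(2)\times\SO(2n)$ and $K_M=\SO(2)\times\SO(2n-2)$ have a nontrivial relative embedding: the $\SO(2)$-factor of $K_M$ (the maximal compact of $\SL(2,\RR)\subset M$) is a linear combination of the centre of $K$ with an $\SO(2)$-subgroup of $\SO(2n)$. I would fix torus coordinates $(\ell\mid\mu)$ for $K$ and $(\ell'\mid\nu)$ for $K_M$; in these coordinates the restriction combines the interlacing rule for $\SO(2n)\to\SO(2n-2)$ with a linear shift of $\ell'$ tied to the discarded last weight. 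One then shows that $\Res(K^0(K))$ is exactly the subgroup of $K^0(K_M)$ invariant under an involution of the form $(\ell',\nu_1,\dots,\nu_{n-1})\leftrightarrow(\ell'+s,\nu_1,\dots,-\nu_{n-1})$ for an explicit shift $s$. Next I would expand \eqref{eq:AnsatzSum} in these coordinates (the factor $\Lambda^j\tilde\frakp_+^{[\ev]}$ is a standard exterior-algebra character, and $W_\kappa$ is small enough to write the Weyl sum out fully) and prove the key lemma: the result is invariant under the characterizing involution if and only if $V_\lambda|_{\SO(2n)}$ is self-dual (making the $\SO(2n)$-part automatically invariant) or $\lambda(H_0)=n\sqrt{-1}$ (placing the $\SO(2)$-shift at its symmetric point). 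The ``if'' direction then constructs a $K$-preimage by inverting the branching; ``only if'' isolates a single off-symmetric $K_M$-summand with nonzero coefficient that fails to cancel otherwise.

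\smallskip

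\emph{Part (\ref{thm:AnsatzCaseByCase3}).} For $\frakg=\su(2,3)$, the $S$-condition in $K=S(U(2)\times U(3))$ forces $K$-highest weights $(a_1,a_2\mid b_1,b_2,b_3)$ to be defined modulo shifts $(a_i,b_j)\mapsto(a_i+c,b_j+c)$, and the embedding $K_M=U(1)\times U(2)\hookrightarrow K$ is not the standard block inclusion but involves central compensations enforcing this $S$-condition. Restriction to $K_M$ then produces $K_M$-irreps $(c'\mid d_1,d_2)$ satisfying a linear congruence coming from the kernel characters. The plan is to expand \eqref{eq:AnsatzSum} --- a finite sum, since $W_\kappa$ is small --- and exhibit, independent of $\lambda$, a $K_M$-constituent of the Ansatz with nonzero coefficient whose triple $(c'\mid d_1,d_2)$ violates this congruence, ruling out goodness universally. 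The main obstacle is Part (\ref{thm:AnsatzCaseByCase2}): matching the image of restriction to the two sufficient conditions requires careful bookkeeping of the twisted $\SO(2)$-embedding against the Weyl sum. Part (\ref{thm:AnsatzCaseByCase3}) also requires ingenuity in locating a universal obstruction that applies to every $\lambda$.
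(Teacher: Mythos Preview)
Your outlines for Parts~(\ref{thm:AnsatzCaseByCase1}) and~(\ref{thm:AnsatzCaseByCase2}) match the paper's approach. For~(\ref{thm:AnsatzCaseByCase1}) the paper runs exactly your unitriangularity argument, phrased as an induction on $\ell(\mu)=\sum_i(\mu_i-\mu_{m-1})$. For~(\ref{thm:AnsatzCaseByCase2}) the paper proves precisely your ``involution'' characterization (Proposition~\ref{prop:RestrictionSO(2,n)}: the image is spanned by $\tau_{\mu,q}$ with $\mu_{n-1}=0$ and by pairs $\tau_{\mu,q+s}+\tau_{\mu^*,q-s}$), then observes that the $2n$ terms $W_{\lambda_w}$ naturally pair up as $(\delta_+,w^{(i)})\leftrightarrow(\delta_-,w^{(i)})$ into exactly such expressions, and checks when the $\hat c_{\lambda_w}^2$-partition respects this pairing. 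One point worth making explicit in your write-up: goodness demands \eqref{eq:AnsatzSum}$\in\Res(K^0(K))$ for \emph{each} $c$ separately, so you must track which terms land in the same level set, not just verify symmetry of the full sum.

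Your plan for Part~(\ref{thm:AnsatzCaseByCase3}) has a genuine gap. There is no linear congruence obstruction: every irreducible $\tau_{\mu',\mu'',p}$ of $K_M$ occurs as a constituent of some $\pi_{\lambda',\lambda''}|_{K_M}$ (one can choose $\lambda',\lambda''$ interlacing $\mu',\mu''$ with $|\lambda'|+|\lambda''|-|\mu'|-|\mu''|$ equal to any prescribed integer), so $\Res(K^0(K))$ is not detected by any sublattice of the highest-weight lattice. The paper's obstruction is instead a \emph{quadratic} linear functional
\[
I(\tau_{\mu',\mu'',p}) = (\mu_1'+\mu''-p)+(\mu_2'+\mu''-p)+(\mu_1'+\mu''-p)^2-(\mu_2'+\mu''-p)^2
\]
on $K^0(K_M)$, shown by direct summation to vanish on every $\pi_{\lambda',\lambda''}|_{K_M}$ and hence on the image. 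Even with $I$ in hand, one cannot simply exhibit a single bad constituent: the six terms $W_{\lambda_w}$, $w\in W_\kappa$, must be grouped by equal $\hat c_{\lambda_w}^2$, the grouping depends on $\lambda$, and for each conceivable grouping one has to locate a level set with nonzero $I$. The paper reduces first (via Lemma~\ref{lem:SU(m,n)exteriorpplus}, which expresses $\tau_{0,-1,1}\otimes\sum_j(-1)^j\Lambda^j\tilde\frakp_+^{[\ev]}$ as a restriction) to the twisted sums $\tau_{0,1,-1}\otimes\sum_w(-1)^{\ell(w)}W_{\lambda_w}$, tabulates the six values of $\hat c_{\lambda_w}$ and the six conditions ``$I=0$'', and then runs a systematic case analysis using the order relations among the $\hat c_{\lambda_w}$ to derive a contradiction for every $\lambda$. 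Your sketch would need both the nonlinear invariant and this case-by-case elimination; the congruence idea does not get off the ground.
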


In the formulation of Theorem~\ref{thm:AnsatzCaseByCase}~\eqref{thm:AnsatzCaseByCase2} we have used the (unique) element $H_0\in\frakz(\frakk)$ with $\ad(H_0)|_{\frakp_\pm}=\pm i\cdot\id_{\frakp_\pm}$ (see \cite[page 3]{MS}).

\begin{remalph}
\begin{enumerate}
\item Part \eqref{thm:AnsatzCaseByCase1} of Theorem~\ref{thm:AnsatzCaseByCase} shows that the Ansatz works without additional assumptions on $\VV_\lambda$ if $G/K$ is of rank one. This result can also be found in \cite {M-V83} and \cite{B-O95}. We thank the referee for these references.
\item Part \eqref{thm:AnsatzCaseByCase2} of Theorem~\ref{thm:AnsatzCaseByCase} shows that for $G=\SO_0(2,2n)$ the Ansatz only works if $\VV_\lambda$ is a vector bundle associated to a representation of $K=\SO(2)\times\SO(2n)$ for which either the $\SO(2)$-factor acts by a fixed character depending on $n$, or the $\SO(2n)$-factor acts in a self-dual way. In particular, the Ansatz works for all line bundles since in this case $\SO(2n)$ acts by the trivial representation which is self-dual.
\item The Hermitian symmetric spaces $G/K$ of rank two in the above list belong to the following cases:
\begin{itemize}
\item $\frakg=\su(2,n)$ ($n\geq2$),
\item $\frakg=\so^*(8),\so^*(10)$,
\item $\frakg=\so(2,2n)$ ($n\geq2$),
\item $\frakg=\frake_{6(-14)}$,
\end{itemize}
with the low-dimensional isomorphisms
$$ \su(2,2)\simeq\so(2,4), \qquad \mbox{and} \qquad \so^*(8)\simeq\so(2,6). $$
By part \eqref{thm:AnsatzCaseByCase2} of the previous theorem, the Ansatz works with additional restrictions for $\frakg=\so(2,2n)$, so the same is true for $\frakg=\su(2,2)$ and $\frakg=\so^*(8)$. Already for $\frakg=\su(2,3)$ the Ansatz fails as stated in part \eqref{thm:AnsatzCaseByCase3} of the theorem, and it is likely that the same is true for $\frakg=\su(2,n)$, $n\geq3$. For a complete treatment of all rank two cases the algebras $\frakg=\so^*(10)$ and $\frakg=\frake_{6(-14)}$ are missing.
\item In Section~\ref{sec:SO*(2n)} we provide for the case $G=\SO^*(2n)$ an explicit description of both the restriction map $\Res:K^0(K)\to K^0(K_M)$ and all quantities occurring in \eqref{eq:AnsatzSum}. However, we did not succeed in obtaining a characterization of the good highest weights for $n\geq5$. We conjecture that for $n\geq5$ no highest weight is good, and some computations for $n=5$ indicate that this is indeed the case.
\end{enumerate}
\end{remalph}

\subsection*{Notation}

$\NN=\{0,1,2,\ldots\}$, $\ZZ^n_+=\{\lambda\in\ZZ^n:\lambda_1\geq\ldots\geq\lambda_n\}$, $\ZZ^n_{++}=\{\lambda\in\ZZ^n:\lambda_1\geq\ldots\geq\lambda_{n-1}\geq|\lambda_n|\}$.

\section{$G=\SU(m,n)$}

In this section we verify Theorem~\ref{thm:AnsatzCaseByCase}~\eqref{thm:AnsatzCaseByCase1} and \eqref{thm:AnsatzCaseByCase3}.

\subsection{Some subgroups of $\SU(m,n)$}

Let $G=\SU(m,n)$ and choose the maximal compact subgroup
\begin{equation*}
 K = \left\{\left(\begin{array}{cc}g&\\&h\end{array}\right):g\in\upU(m),h\in\upU(n),\det(g)\det(h)=1\right\} \simeq \upS(\upU(m)\times\upU(n)).
\end{equation*}
Put
$$ X_\kappa = \begin{pmatrix}0&&1\\&\0_{m+n-2}&\\1&&0\end{pmatrix}, $$
then $\ad(X_\kappa)$ acts on $\frakg$ with eigenvalues $0$, $\pm1$ and $\pm2$. Write $\frakm^1$ for the $0$-eigenspace and $\frakn$ for the direct sum of the positive eigenspaces, then $\frakq=\frakm^1\oplus\frakn$ is a cuspidal maximal parabolic subalgebra of $\frakg$. We further decompose $\frakm^1=\frakm\oplus\fraka$ where $\fraka=\RR X_\kappa$ and $\frakm$ is a direct sum of semisimple and compact abelian ideals. On the group level, $Q=N_G(\frakq)$ is a cuspidal maximal parabolic subgroup of $G$ with Langlands decomposition $Q=MAN$, where $MA=Z_G(\fraka)$, $A=\exp(\fraka)$ and $N=\exp(\frakn)$. The intersection $K_M=K\cap M$ is maximal compact in $M$ and given by
\begin{equation*}
 K_M = \left\{\left(\begin{array}{cccc}z&&&\\&g&&\\&&h&\\&&&z\end{array}\right):z\in\upU(1),g\in\upU(m-1),h\in\upU(n-1),z^2\det(g)\det(h)=1\right\}.
\end{equation*}

\subsection{The branching law}

Both $K$ and $K_M$ are connected, so that we can describe irreducible representations in terms of their highest weights. Let
$$ \frakt = \{\sqrt{-1}\diag(t_1,\ldots,t_{m+n}):t_i\in\RR,t_1+\cdots+t_{m+n}=0\} \subseteq \frakk, $$
then $\frakt$ is a maximal torus of $\frakk$ and $\frakg$. The root system $\Delta(\frakk_\CC,\frakt_\CC)$ is given by $\{\pm(\varepsilon_i-\varepsilon_j):1\leq i<j\leq m\mbox{ or }m+1\leq i<j\leq m+n\}$, where
$$ \varepsilon_i(\sqrt{-1}\diag(t_1,\ldots,t_{m+n})) = \sqrt{-1}t_i. $$
We choose the positive system $\Delta^+(\frakk_\CC,\frakt_\CC)=\{\varepsilon_i-\varepsilon_j:1\leq i<j\leq m\mbox{ or }m+1\leq i<j\leq m+n\}$. With this notation, irreducible representations of $K$ are parametrized by their highest weights $\lambda=\lambda_1'\varepsilon_1+\cdots+\lambda_m'\varepsilon_m+\lambda_1''\varepsilon_{m+1}+\cdots+\lambda_n''\varepsilon_{m+n}$, where $\lambda'=(\lambda_1',\ldots,\lambda'_m)\in\ZZ^m_+$ and $\lambda''=(\lambda_1'',\ldots,\lambda_n'')\in\ZZ^n_+$. We write $\lambda=(\lambda',\lambda'')$ for convenience and denote by $\pi_\lambda=\pi_{\lambda',\lambda''}$ the corresponding equivalence class of representations. Note that $\pi_{\lambda'+k,\lambda''+k}\simeq\pi_{\lambda',\lambda''}$ for $k\in\ZZ$, where $\lambda'+k=(\lambda_1'+k,\ldots,\lambda_m'+k)$ and similar for $\lambda''$.

The intersection $\frakt_M=\frakt\cap\frakk_M$ is a maximal torus of $\frakk_M$ and we write $\overline{\varepsilon}_i=\varepsilon_i|_{\frakt_M}$. Then $\overline{\varepsilon}_1=\overline{\varepsilon}_{m+n}$ and $\Delta^+(\frakk_{M,\CC},\frakt_{M,\CC})=\{\overline{\varepsilon}_i-\overline{\varepsilon}_j:2\leq i<j\leq m\mbox{ or }m+1\leq i<j\leq m+n-1\}$ is a positive system in $\Delta(\frakk_{M,\CC},\frakt_{M,\CC})$. With this notation, irreducible representations of $K_M$ are parametrized by highest weights $p\overline{\varepsilon}_1+\mu_1'\overline{\varepsilon}_2+\cdots+\mu_{m-1}'\overline{\varepsilon}_m+\mu_1''\overline{\varepsilon}_{m+1}+\cdots+\mu_{n-1}''\overline{\varepsilon}_{m+n-1}$, where $\mu'=(\mu_1',\ldots,\mu_{m-1}')\in\ZZ^{m-1}_+$, $\mu''=(\mu_1'',\ldots,\mu_{n-1}'')\in\ZZ^{n-1}_+$, $p\in\ZZ$ and we write $\tau_{\mu',\mu'',p}$ for the corresponding equivalence class of representations. Note that $\tau_{\mu'+k,\mu''+k,p+2k}\simeq\tau_{\mu',\mu'',p}$.

For tuples $\lambda\in\ZZ^k_+$ and $\mu\in\ZZ^{k-1}_+$ we write
$$ \mu\subseteq\lambda \quad :\Leftrightarrow \quad \lambda_1\geq \mu_1\geq \lambda_2\geq\ldots\geq \lambda_{k-1}\geq \mu_{k-1}\geq \lambda_k $$
and denote $|\lambda|=\lambda_1+\cdots+\lambda_k$.

\begin{lemma}\label{lem:U(m,n)branching}
For $(\lambda',\lambda'')\in\ZZ^m_+\times\ZZ^n_+$ the following branching law holds:
$$ \pi_{\lambda',\lambda''}|_{K_M} \simeq \bigoplus_{\substack{\mu'\subseteq\lambda'\\\mu''\subseteq\lambda''}}\tau_{\mu',\mu'',|\lambda'|+|\lambda''|-|\mu'|-|\mu''|}. $$
\end{lemma}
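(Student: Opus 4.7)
The plan is to reduce the branching from $K=\upS(\upU(m)\times\upU(n))$ to $K_M$ to two applications of the classical Gelfand--Tsetlin rule for $\upU(k)\downarrow\upU(k-1)\times\upU(1)$. First, using the equivalence $\pi_{\lambda'+k,\lambda''+k}\simeq\pi_{\lambda',\lambda''}$, I would lift $\pi_{\lambda',\lambda''}$ to the external tensor product of the irreducible $\upU(m)$-representation $V_{\lambda'}$ (of highest weight $\lambda'$) with the irreducible $\upU(n)$-representation $V_{\lambda''}$ (of highest weight $\lambda''$). The composed embedding $K_M\hookrightarrow K\hookrightarrow\upU(m)\times\upU(n)$ sends $\diag(z,g,h,z)$ to $(\diag(z,g),\diag(h,z))$, so $K_M$ lands inside $\upU(1)_1\times\upU(m-1)\times\upU(n-1)\times\upU(1)_2$---where $\upU(1)_1$ is the top-left entry of $\upU(m)$ and $\upU(1)_2$ the bottom-right entry of $\upU(n)$---as the codimension-one subgroup cut out by $z_1=z_2$ together with $z_1z_2\det(g)\det(h)=1$.

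Applying Gelfand--Tsetlin branching in each $\upU$-factor gives
\[
V_{\lambda'}|_{\upU(1)_1\times\upU(m-1)}=\bigoplus_{\mu'\subseteq\lambda'}\chi_{|\lambda'|-|\mu'|}\boxtimes V_{\mu'},\qquad V_{\lambda''}|_{\upU(n-1)\times\upU(1)_2}=\bigoplus_{\mu''\subseteq\lambda''}V_{\mu''}\boxtimes\chi_{|\lambda''|-|\mu''|},
\]
where $\chi_a$ denotes the character $z\mapsto z^a$ of $\upU(1)$. Taking the external tensor product yields the full decomposition of $V_{\lambda'}\boxtimes V_{\lambda''}$ under $\upU(1)_1\times\upU(m-1)\times\upU(n-1)\times\upU(1)_2$, indexed by pairs $(\mu',\mu'')$ with $\mu'\subseteq\lambda'$ and $\mu''\subseteq\lambda''$.

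To conclude, I would restrict each summand to $K_M$. Identifying $z_1$ with $z_2$ turns the two $\upU(1)$-characters into the single character $\chi_p$ with $p=|\lambda'|+|\lambda''|-|\mu'|-|\mu''|$, and by the relation $\overline{\varepsilon}_1=\overline{\varepsilon}_{m+n}$ on $\frakt_M$ this is precisely the $\overline{\varepsilon}_1$-weight of $\tau_{\mu',\mu'',p}$. Each summand $\chi_p\boxtimes V_{\mu'}\boxtimes V_{\mu''}$ of $\upU(1)\times\upU(m-1)\times\upU(n-1)$ remains irreducible upon restriction to $K_M$, because $K_M$ is a codimension-one connected subgroup whose missing infinitesimal direction is central and so acts by a scalar. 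The paper's equivalence $\tau_{\mu'+k,\mu''+k,p+2k}\simeq\tau_{\mu',\mu'',p}$ is exactly the tensoring ambiguity by the character $(z,g,h)\mapsto(z^2\det(g)\det(h))^k$ which is trivial on $K_M$, and a direct check shows that the summands for distinct $(\mu',\mu'')$ remain pairwise inequivalent on $K_M$. No real obstacle arises---the entire argument is routine bookkeeping of the two $\upU(1)$-weights once the classical Gelfand--Tsetlin branching has been quoted.
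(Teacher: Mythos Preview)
Your proposal is correct and follows essentially the same route as the paper: both invoke the classical $\upU(k)\downarrow\upU(k-1)$ branching rule in each factor and then keep track of the $\upU(1)$-action to read off the parameter $p=|\lambda'|+|\lambda''|-|\mu'|-|\mu''|$. Your write-up simply makes explicit the bookkeeping that the paper's two-line proof leaves to the reader.
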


\begin{proof}
The classical branching law for the restriction $\upU(k)\searrow\upU(k-1)$ states that the irreducible representation of $\upU(k-1)$ of parameter $\mu\in\ZZ^{k-1}_+$ occurs in the restriction of the irreducible representation of $\upU(k)$ of parameter $\lambda\in\ZZ^k_+$ if and only if $\mu\subseteq\lambda$, and in this case it occurs with multiplicity one (see e.g. \cite[Theorem 9.14]{Kna02}). Keeping track of the action of the $U(1)$-factor in $K_M$ gives the claimed branching formula.
\end{proof}

\begin{proof}[Proof of Theorem~\ref{thm:AnsatzCaseByCase}~\eqref{thm:AnsatzCaseByCase1}]
Assume $n=1$, then $\pi_{\lambda',\lambda''}\simeq\pi_{\lambda'-\lambda'',0}$, $\lambda'\in\ZZ_+^m$, $\lambda''\in\ZZ_+^1=\ZZ$, and we abbreviate $\pi_{\lambda}=\pi_{\lambda,0}$, $\lambda'\in\ZZ^m_+$. We further write $\tau_{\mu,p}=\tau_{\mu,0,p}$ since $\ZZ^{n-1}_+=\ZZ^0$. In this simplified notation the branching law reads
$$ \pi_\lambda|_{K_M} = \bigoplus_{\mu\subseteq\lambda} \tau_{\mu,|\lambda|-|\mu|}. $$
The representations $\pi_\lambda$ are pairwise inequivalent and $\tau_{\mu,p}\simeq\tau_{\mu+k,p+2k}$, $k\in\ZZ$. We now show by induction on
$$ \ell(\mu) = (\mu_1-\mu_{m-1})+\cdots+(\mu_{m-2}-\mu_{m-1}) $$
that every $\tau_{\mu,p}$ is in the image of the restriction map. For $\ell(\mu)=0$ we have $\mu=(q,\ldots,q)$, $q\in\ZZ$, and
$$ \tau_{\mu,p}\simeq\tau_{(2q-p,\ldots,2q-p),2q-p}\simeq\pi_{(2q-p,\ldots,2q-p)}|_{K_M}. $$
Hence $\tau_{\mu,p}$ is in the image of the restriction map. Next assume that $\tau_{\nu,p}$ is in the image of $\Res$ for all $\nu\in\ZZ^{m-1}_+$ with $\ell(\nu)\leq k$. Let $\mu\in\ZZ^{m-1}_+$ with $\ell(\mu)=k+1$ and $p\in\ZZ$. Since
$$ \tau_{\mu,p} \simeq \tau_{(\mu_1+\mu_{m-1}-p,\ldots,\mu_{m-2}+\mu_{m-1}-p,2\mu_{m-1}-p),2\mu_{m-1}-p} $$
we may assume that $\mu_{m-1}=p$. Put $\lambda=(\mu_1,\ldots,\mu_{m-1},\mu_{m-1})$, then $\nu\subseteq\lambda$ implies $\nu_i\leq\mu_i$ for $i=1,\ldots,m-2$ and $\nu_{m-1}=\mu_{m-1}$ so that
$$ \ell(\nu) = (\nu_1-\nu_{m-1})+\cdots+(\nu_{m-2}-\nu_{m-1}) \leq (\mu_1-\mu_{m-1})+\cdots+(\mu_{m-2}-\mu_{m-1}) = \ell(\mu). $$
Moreover, the only $\nu\subseteq\lambda$ with $\ell(\nu)=\ell(\mu)=k+1$ is $\nu=\mu$. Applying the induction hypothesis to every $\tau_{\nu,|\lambda|-|\nu|}$ with $\nu\subseteq\lambda$, $\nu\neq\mu$, we find that 
$$ \pi_\lambda|_{K_M} \simeq \tau_{\mu,|\lambda|-|\mu|} \oplus \mbox{terms in $\Res(K^0(K))$.} $$
Since $|\lambda|-|\mu|=\mu_{m-1}=p$ this shows that $\tau_{\mu,p}$ is contained in $\Res(K^0(K))$, and the proof is complete.
\end{proof}

\subsection{The space $\tilde{\frakp}_+^{[\ev]}$}

Define a homomorphism $\kappa:\su(1,1)\to\su(m,n)$ as the composition of the homomorphism
$$ \widetilde{\kappa}:\su(1,1)\to\su(m,n), \quad \left(\begin{array}{cc}x&y\\z&w\end{array}\right)\mapsto\left(\begin{array}{ccc}x&&y\\&\0_{m+n-2}\\z&&w\end{array}\right) $$
and the Lie algebra isomorphism
$$ \sl(2,\RR)\to\su(1,1), \quad \left(\begin{array}{cc}a&b\\c&-a\end{array}\right)\mapsto\left(\begin{array}{cc}\frac{b-c}{2}i&a-\frac{b+c}{2}i\\a+\frac{b+c}{2}i&-\frac{b-c}{2}i\end{array}\right). $$
Then the decomposition $\frakg=\frakg^{[0]}\oplus\frakg^{[1]}\oplus\frakg^{[2]}$ into $\kappa(\sl(2,\RR))$-isotypic components is given by
\begin{align*}
 \frakg^{[0]} &= \left\{\begin{pmatrix}a&&\\&X&\\&&a\end{pmatrix}a\in i\RR,X\in\fraku(m-1,n-1),2a+\tr(X)=0\right\},\\
 \frakg^{[1]} &= \begin{pmatrix}0&\star&0\\\star&\0_{m+n-2}&\star\\0&\star&0\end{pmatrix},\\
 \frakg^{[2]} &= \kappa(\sl(2,\RR)).
\end{align*}
We identify $\frakg_\CC$ with $\sl(m+n,\CC)$, then the choice of $\kappa$ determines
$$ \frakp_+ = \left\{\begin{pmatrix}\0_m&X\\&\0_n\end{pmatrix}:X\in M(m\times n,\CC)\right\}, $$
so that
$$ \tilde{\frakp}_+^{[\ev]} = \frakp_+^{[0]} = \begin{pmatrix}0&&&\\&\0_{m-1}&\star&\\&&\0_{n-1}&\\&&&0\end{pmatrix}. $$
As representation of $K_M$ we have $\tilde{\frakp}_+^{[\ev]}\simeq\tau_{(1,0,\ldots,0),(0,\ldots,0,-1),0}$. Using \cite[Exercise 6.11]{FH91} we obtain the following decomposition for its exterior powers of:
$$ \Lambda^j\tilde{\frakp}_+^{[\ev]} \simeq \bigoplus\tau_{\mu',\mu'',0}, $$
where the summation is over all $(\mu',\mu'')\in\ZZ_+^{m-1}\times\ZZ_+^{n-1}$ with $\mu_1'\geq\ldots\geq\mu_{m-1}'\geq0\geq \mu_1''\geq\ldots\geq\mu_{n-1}''$, $|\mu'|=j=-|\mu''|$ and such that the partition $(-\mu_{n-1}'',\ldots,-\mu_1'')$ of $j$ is conjugate to $\mu'$, i.e.
$$ -\mu_j'' = \#\{i:\mu_i'\geq n-j\}. $$

\begin{lemma}\label{lem:SU(m,n)exteriorpplus}
For $n=2$ we have
$$ \tau_{0,-1,1}\otimes\Big(\sum_j(-1)^j\Lambda^j\tilde{\frakp}_+^{[\ev]}\Big)\in\Res(K^0(K)). $$
\end{lemma}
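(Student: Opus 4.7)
The plan is to specialize the formula for $\Lambda^j \tilde{\frakp}_+^{[\ev]}$ to $n=2$, reducing the claim to a very explicit identity in $\Res(K^0(K))$, and then exhibit a virtual $K$-representation that realizes it via a telescoping cancellation in the branching rule from Lemma~\ref{lem:U(m,n)branching}.

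For $n=2$ the tuple $\mu''$ reduces to a single nonpositive integer. The conjugate-partition condition in the given decomposition of $\Lambda^j \tilde{\frakp}_+^{[\ev]}$ then forces $\mu'' = -j$ together with $\mu' = (1^j, 0^{m-1-j})$, so
$$\Lambda^j\tilde{\frakp}_+^{[\ev]} \simeq \tau_{(1^j,0^{m-1-j}),\,-j,\,0}, \qquad 0 \le j \le m-1.$$
Since $\tau_{0,-1,1}$ is one-dimensional, tensoring just shifts the second and third parameters, and the lemma becomes the assertion that
$$\sum_{j=0}^{m-1} (-1)^j \,\tau_{(1^j,0^{m-1-j}),\,-j-1,\,1} \;\in\; \Res(K^0(K)).$$

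I would propose the candidate
$$F \;=\; \sum_{j=0}^{m-1} (-1)^j\, \pi_{(1^{j+1},0^{m-j-1}),\,(0,-j-1)} \;-\; \1 \;\in\; K^0(K),$$
writing $F_j$ for the $j$-th summand, and verify via Lemma~\ref{lem:U(m,n)branching} that $\Res(F)$ equals the target. Each $\lambda' = (1^{j+1},0^{m-j-1})$ has only two distinct entries, so the interlacing condition admits exactly two values of $\mu'$: a ``lower'' choice $(1^j,0^{m-j-1})$ and an ``upper'' choice $(1^{j+1},0^{m-j-2})$. Because $|\lambda'|+|\lambda''| = 0$, the central parameter is given by the uniform formula $p = -a-\mu''$ whenever $\mu' = (1^a,0^{m-1-a})$. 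Consecutive summands $F_{a-1}$ and $F_a$ therefore contribute the same shape $\mu' = (1^a,0^{m-1-a})$ with the same $p$-formula but on overlapping $\mu''$-ranges $[-a,0]$ and $[-a-1,0]$ respectively, and with opposite signs $(-1)^{a-1}$ and $(-1)^a$, so almost everything cancels. The only survivor for each $a\ge 1$ is the boundary term $\mu''=-a-1$, contributing $(-1)^a\tau_{(1^a,0^{m-1-a}),-a-1,1}$; at the endpoint $a=0$ there is no predecessor, so both $\mu''\in\{0,-1\}$ survive, and the $\mu''=0$ term is the trivial $K_M$-representation, killed by the subtracted $-\1$.

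The main obstacle is finding the correct family $\{F_j\}$: the point is to choose $\lambda'$ and $\lambda''$ so that $|\lambda'|+|\lambda''|=0$ (for the correct $p$-value) and so that the two admissible $\mu'$-shapes coming from $F_{a-1}$ and $F_a$ share a common column matching up to exactly one boundary term. Once the shapes $(1^{j+1},0^{m-j-1})$ and $(0,-j-1)$ have been guessed, the cancellation is forced by Lemma~\ref{lem:U(m,n)branching} and the proof reduces to bookkeeping. I would first verify the resulting identity by hand for $m=2$ and $m=3$ to confirm the cancellation pattern before writing the general telescoping argument.
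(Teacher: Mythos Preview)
Your proposal is correct and follows essentially the same route as the paper. After the reindexing $k=j+1$, your candidate $F$ coincides with the paper's element $\sum_{j=0}^m(-1)^{j-1}\pi_{(1)_j,(0,-j)}$ (the $j=0$ summand there being exactly your $-\1$), and the telescoping cancellation you describe is the same one carried out in the paper.
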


\begin{proof}
We write
$$ (1)_j = (\underbrace{1,\ldots,1}_{\text{$j$ times}},0,\ldots,0), $$
the length of the vector being clear from the context. In the Grothendieck group $K^0(K_M)$ we form the alternating sum
\begin{align*}
 \Big(\sum_{j=0}^m(-1)^{j-1}\pi_{(1)_j,(0,-j)}\Big)\Big|_{K_M} ={}& \sum_{j=1}^{m-1}(-1)^{j-1}\sum_{k=0}^j\big(\tau_{(1)_j,-k,k-j}+\tau_{(1)_{j-1},-k,k-j+1}\big)\\
 & -\tau_{0,0,0}+(-1)^{m-1}\sum_{k=0}^m\tau_{(1)_{m-1},-k,k-m+1}\\
 ={}& \sum_{j=1}^m(-1)^{j-1}\tau_{(1)_{j-1},-j,1}\\
 ={}& \Big(\sum_{j=0}^{m-1}(-1)^j\tau_{(1)_j,-j,0}\Big)\otimes\tau_{0,-1,1}\\
 ={}& \Big(\sum_{j=0}^{m-1}(-1)^j\Lambda^j\tilde{\frakp}^{[\ev]}_+\Big)\otimes\tau_{0,-1,1},
\end{align*}
where we have used Lemma~\ref{lem:U(m,n)branching} in the first step and a telescoping argument in the second step.
\end{proof}

\begin{remark}
It is likely that a similar statement holds in general, but since the Ansatz already fails in the case $n=2$, we did not attempt to prove such a generalization.
\end{remark}

\subsection{The numbers $\hat{c}_{\lambda_w}$}

We now compute the numbers $\hat{c}_{\lambda_w}$. The element $H_\kappa\in\su(m,n)$ is given by
$$ H_\kappa = \kappa\left(\begin{array}{cc}0&1\\-1&0\end{array}\right) = \diag(i,0,\ldots,0,-i). $$
This implies that
$$ L_\kappa = Z_K(H_\kappa) = \upS(\upU(1)\times\upU(m-1)\times\upU(n-1)\times\upU(1)). $$
The torus $\frakt$ is contained in $\frakl_\kappa$ and $\Delta(\frakl_{\kappa,\CC},\frakt_\CC)=\{\pm(\varepsilon_i-\varepsilon_j):2\leq i<j\leq m\mbox{ or }m+1\leq i<j\leq m+n-1\}$. We choose the positive roots $\Delta^+(\frakl_{\kappa,\CC},\frakt_\CC)=\{\varepsilon_i-\varepsilon_j:2\leq i<j\leq m\mbox{ or }m+1\leq i<j\leq m+n-1\}\subseteq\Delta^+(\frakk_\CC,\frakt_\CC)$.

The Weyl group $W(\frakk_\CC)$ of $\Delta(\frakk_\CC,\frakt_\CC)$ is naturally isomorphic to $S_m\times S_n$, where $S_k$ denotes the symmetric group in $k$ letters. Then
\begin{align*}
 W_\kappa &= \{w\in W(\frakk_\CC):w^{-1}\alpha>0\,\forall\,\alpha\in\Delta^+(\frakl_{\kappa,\CC},\frakt_\CC)\}\\
 &= \{(w_1,w_2)\in S_m\times S_n:w_1^{-1}(2)<\ldots<w_1^{-1}(m)\mbox{ and }w_2^{-1}(1)<\ldots<w_2^{-1}(n-1)\}\\
 &= \{(w_1^{(i)},w_2^{(j)}):i=1,\ldots,m,\,j=1,\ldots,n\},
\end{align*}
where
$$ w_1^{(i)}(k) = \begin{cases}k+1 & \mbox{for $1\leq k<i$,}\\1 & \mbox{for $k=i$,}\\k & \mbox{for $i<k\leq m$,}\end{cases} \qquad w_2^{(j)}(k) = \begin{cases}k & \mbox{for $1\leq k<j$,}\\n & \mbox{for $k=j$,}\\k-1 & \mbox{for $j<k\leq n$.}\end{cases} $$
Note that $\ell(w_1^{(i)})=i-1$ and $\ell(w_2^{(j)})=j-1$, and since $\ell(w_1^{(i)},w_2^{(j)})=\ell(w_1^{(i)})+\ell(w_2^{(j)})$ we obtain
$$ (-1)^{\ell(w_1^{(i)},w_2^{(j)})} = (-1)^{i+j}. $$
We further have
$$ \rho_c = (\rho_{c,1},\rho_{c,2}) $$
with
$$ \rho_1 = (\tfrac{m-1}{2},\tfrac{m-3}{2},\ldots,\tfrac{1-m}{2}) = (\tfrac{m-2k+1}{2})_{k=1,\ldots,m}, \qquad \rho_{c,2} = (\tfrac{n-2k+1}{2})_{k=1,\ldots,n}.  $$

Now let $\lambda=(\lambda',\lambda'')\in\ZZ^m_+\times\ZZ^n_+$ be a highest weight of an irreducible $K$-representation. Then
\begin{align*}
 w_1^{(i)}(\lambda'+\rho_{c,1})-\rho_{c,1} ={}& w_1^{(i)}(\lambda_1'+\tfrac{m-1}{2},\ldots,\lambda_m'+\tfrac{1-m}{2})-(\tfrac{m-1}{2},\ldots,\tfrac{1-m}{2})\\
 ={}& (\lambda_i'+\tfrac{m-2i+1}{2},\lambda_1'+\tfrac{m-1}{2},\ldots,\widehat{\lambda_i'+\tfrac{m-2i+1}{2}},\ldots,\lambda_m'+\tfrac{1-m}{2})\\
 & \hspace{6.5cm}-(\tfrac{m-1}{2},\tfrac{m-3}{2},\ldots,\tfrac{1-m}{2})\\
 ={}& (\lambda_i'+1-i,\lambda_1'+1,\ldots,\lambda_{i-1}'+1,\lambda_{i+1}',\ldots,\lambda_m')
\end{align*}
and similarly
$$ w_2^{(j)}(\lambda''+\rho_{c,2})-\rho_{c,2} = (\lambda_1'',\ldots,\lambda_{j-1}'',\lambda_{j+1}''-1,\ldots,\lambda_n''-1,\lambda_j''+n-j). $$
Together this gives for $w=(w_1^{(i)},w_2^{(j)})$:
\begin{align*}
 \lambda_w = (w_1^{(i)},w_2^{(j)})((\lambda',\lambda'')+\rho_c)-\rho_c ={}& \Big((\lambda_i'+1-i,\lambda_1'+1,\ldots,\lambda_{i-1}'+1,\lambda_{i+1}',\ldots,\lambda_m'),\\
 & \quad (\lambda_1'',\ldots,\lambda_{j-1}'',\lambda_{j+1}''-1,\ldots,\lambda_n''-1,\lambda_j''+n-j)\Big).
\end{align*}
Restricting this weight to $\frakt_M$ we obtain a dominant integral weight for $\frakk_M$ which belongs to the representation
$$ \tau_{(\lambda_1'+1,\ldots,\lambda_{i-1}'+1,\lambda_{i+1}',\ldots,\lambda_m'),(\lambda_1'',\ldots,\lambda_{j-1}'',\lambda_{j+1}''-1,\ldots,\lambda_n''-1),\lambda_i'+\lambda_j''+n+1-i-j}. $$

We now compute $\hat{c}_{\lambda_w}=((\lambda_w\circ C_\kappa)+\rho_Q)(\widehat{X}_\kappa)$:
\begin{align*}
 ((\lambda_w\circ C_\kappa)+\rho_Q)(X_\kappa) &= -\sqrt{-1}\lambda_w(H_\kappa) + \rho_Q(X_\kappa)\\
 &= (\lambda_i'-\lambda_j''-n-i+j+1) + (m+n-1)\\
 &= \lambda_i'-\lambda_j''+(m-i)+j.
\end{align*}

\subsection{Invariants for $(m,n)=(3,2)$}

For $(\mu',\mu'',p)\in\ZZ_+^2\times\ZZ\times\ZZ$ with $\mu'=(\mu_1',\mu_2')$ we put
\begin{equation}
 I(\tau_{\mu',\mu'',p}) := (\mu_1'+\mu''-p)+(\mu_2'+\mu''-p)+(\mu_1'+\mu''-p)^2-(\mu_2'+\mu''-p)^2\label{eq:U(m,n)DefInvariant}
\end{equation}
and extend this $\ZZ$-linearly to the Grothendieck group $K^0(K_M)$ which is the free $\ZZ$-module generated by all $\tau_{\mu',\mu'',p}$. Then $I$ is an invariant for the image $\Res(K^0(K))$ of the restriction map:

\begin{lemma}\label{lem:Invariant}
For all $(\lambda',\lambda'')\in\ZZ_+^3\times\ZZ_+^2$ we have $I(\pi_{\lambda',\lambda''}|_{K_M})=0$. In particular, $I$ vanishes on the image $\Res(K^0(K))$ of the restriction map.
\end{lemma}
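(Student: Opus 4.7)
The plan is to apply Lemma~\ref{lem:U(m,n)branching} (specialised to $(m,n)=(3,2)$) to expand $\pi_{\lambda',\lambda''}|_{K_M}$ and then exploit a trace computation on $\pi_{\lambda'}^{\upU(3)}$. The summation will run over $\mu'=(\mu_1',\mu_2')\subseteq\lambda'$ and $\mu''\in\{\lambda_2'',\ldots,\lambda_1''\}$, with $p=A+B-|\mu'|-\mu''$, where I abbreviate $A:=|\lambda'|$ and $B:=|\lambda''|$.

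First I would introduce $a:=\mu_1'+\mu''-p$ and $b:=\mu_2'+\mu''-p$. These are invariant under the shift $(\mu',\mu'',p)\mapsto(\mu'+k,\mu''+k,p+2k)$, so $I$ is well-defined on $K^0(K_M)$. A direct substitution should yield
\[
I(\tau_{\mu',\mu'',p})=a+b+a^2-b^2=(a+b)(1+a-b)=(3|\mu'|+4\mu''-2A-2B)(1+\mu_1'-\mu_2').
\]
Summing $\mu''$ first over $[\lambda_2'',\lambda_1'']$, the mean of $4\mu''$ is exactly $2B$, so the first factor averages to $3|\mu'|-2A$ and
\[
I(\pi_{\lambda',\lambda''}|_{K_M})=(\lambda_1''-\lambda_2''+1)\sum_{\mu'\subseteq\lambda'}(3|\mu'|-2A)(1+\mu_1'-\mu_2').
\]
It will therefore suffice to prove that the inner sum vanishes.

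The decisive step is to recognise $1+\mu_1'-\mu_2'=\dim\pi_{\mu'}^{\upU(2)}$ (Weyl dimension formula) and to view the sum over $\mu'\subseteq\lambda'$ as the classical $\upU(3)\searrow\upU(2)$ branching $\pi_{\lambda'}^{\upU(3)}|_{\upU(2)}=\bigoplus_{\mu'\subseteq\lambda'}\pi_{\mu'}^{\upU(2)}$ already underlying Lemma~\ref{lem:U(m,n)branching}. Then
\[
\sum_{\mu'}\dim\pi_{\mu'}^{\upU(2)}=\dim\pi_{\lambda'}^{\upU(3)},\qquad\sum_{\mu'}|\mu'|\dim\pi_{\mu'}^{\upU(2)}=\tr_{\pi_{\lambda'}^{\upU(3)}}(Z),
\]
where $Z\in\fraku(3)$ is the image of the centre of $\fraku(2)$, equal to $\diag(0,1,1)$ (or $\diag(1,1,0)$, depending on the block embedding) and acting by the scalar $|\mu'|$ on $\pi_{\mu'}^{\upU(2)}$. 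Since $\diag(0,1,1)$, $\diag(1,0,1)$, $\diag(1,1,0)$ are permutation-conjugate in $\upU(3)$ and sum to $2\cdot\id_3$, they share the same trace on $\pi_{\lambda'}^{\upU(3)}$, which is therefore $\tfrac{2A}{3}\dim\pi_{\lambda'}^{\upU(3)}$. The inner sum then collapses to $3\cdot\tfrac{2A}{3}\dim-2A\dim=0$.

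The hard part will be spotting the factorisation $I=(a+b)(1+a-b)$ and the Weyl-dimension identification; once these are in hand the vanishing is an immediate consequence of the elementary $S_3$-symmetry of the diagonal Cartan of $\fraku(3)$. A brute-force double summation over $\mu_1',\mu_2'$ using the standard identities for $\sum u$ and $\sum u^2$ would also succeed, but is substantially less transparent and offers no structural insight into why the chosen $I$ is a restriction invariant.
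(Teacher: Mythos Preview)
Your argument is correct and takes a genuinely different route from the paper. The paper parametrises $\lambda'=(a+K+L,a+L,a)$ and $\lambda''=(b+M,b)$ and performs a brute-force triple sum over $k,\ell,m$, evaluating each of the four summands in the definition of $I$ separately via elementary identities for $\sum k$ and $\sum k^2$; the vanishing emerges only after the four pieces are assembled. You instead factor $I=(a+b)(1+a-b)$, dispose of the $\mu''$-sum by symmetry about $B/2$, and then recognise the remaining $\mu'$-sum as $3\tr_{\pi_{\lambda'}}(Z)-2A\dim\pi_{\lambda'}$ for $Z=\diag(1,1,0)$, which vanishes by the $S_3$-conjugacy of the three diagonal idempotents summing to $2\cdot\id_3$. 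Your approach is shorter and, more importantly, explains \emph{why} this particular combination $I$ annihilates $\Res(K^0(K))$: it is built precisely so that the $\upU(2)$-dimension factor $1+\mu_1'-\mu_2'$ splits off and the residual linear form $3|\mu'|-2A$ is the one killed by the Weyl-group symmetry of $\upU(3)$. The paper's computation buys nothing beyond verification, whereas your trace argument would generalise (e.g.\ to analogous invariants for other $(m,n)$) and makes clear that the existence of such an $I$ is not accidental.
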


\begin{proof}
Write $\lambda'=(a+K+L,a+L,a)$ and $\lambda''=(b+M,b)$ with $a,b\in\ZZ$ and $K,L,M\geq0$, then
$$ \pi_{\lambda',\lambda''}|_{K_M} \simeq \bigoplus_{k=0}^K\bigoplus_{\ell=0}^L\bigoplus_{m=0}^M \tau_{(a+L+k,a+\ell),b+m,a+b+(K-k)+(L-\ell)+(M-m)}. $$
According to the four summands in \eqref{eq:U(m,n)DefInvariant} we split $I(\pi_{\lambda',\lambda''}|_{K_M})$ into four parts which are straightforward to compute:
\begin{align*}
 \sum_{k,\ell,m}(\mu_1'+\mu''-p) &= \sum_{k,\ell,m}\Big(2k+\ell+2m-K-M\Big) = \frac{L}{2}(K+1)(L+1)(M+1),\\
 \sum_{k,\ell,m}(\mu_2'+\mu''-p) &= \sum_{k,\ell,m}\Big(k+2\ell+2m-K-L-M\Big) = -\frac{K}{2}(K+1)(L+1)(M+1),\\
 \sum_{k,\ell,m}(\mu_1'+\mu''-p)^2 &= \sum_{k,\ell,m}\Big(2k+\ell+2m-K-M\Big)^2\\
 &= (K+1)(L+1)(M+1)\Big[\frac{1}{3}K(K+2)+\frac{1}{6}L(2L+1)+\frac{1}{3}M(M+2)\Big],\\
 \sum_{k,\ell,m}(\mu_2'+\mu''-p)^2 &= \sum_{k,\ell,m}\Big(k+2\ell+2m-K-L-M\Big)^2\\
 &= (K+1)(L+1)(M+1)\Big[\frac{1}{6}K(2K+1)+\frac{1}{3}L(L+2)+\frac{1}{3}M(M+2)\Big].
\end{align*}
Subtracting the last expression from the sum of the first three expressions gives $0$.
\end{proof}

We use the invariant $I$ to prove that for $(m,n)=(3,2)$ no highest weight is good.

\begin{proof}[Proof of Theorem~\ref{thm:AnsatzCaseByCase}~\eqref{thm:AnsatzCaseByCase3}]
In view of \eqref{eq:AnsatzSum} and Lemma~\ref{lem:SU(m,n)exteriorpplus} it suffices to show that for any $\lambda$ there exists $c\in\RR$ such that the sum
\begin{equation}
 \tau_{0,1,-1}\otimes\Big(\sum_{\substack{w\in W_\kappa\\\hat{c}_{\lambda_w}^2=c^2}}(-1)^{\ell(w)}W_{\lambda_w}\Big) = \sum_{\substack{w\in W_\kappa\\\hat{c}_{\lambda_w}^2=c^2}}(-1)^{\ell(w)}\tau_{0,1,-1}\otimes W_{\lambda_w}\label{eq:U(3,2)relevantsummation}
\end{equation}
is not contained in $\Res(K^0(K))$. For $\lambda=(\lambda',\lambda'')$ we list all summands with corresponding $\hat{c}_{\lambda_w}$ and invariant $I$ in Table~\ref{tb:U(3,2)summands}.
\begin{table}[h]
	\begin{tabular}{cccc}
		$w$ & $(-1)^{\ell(w)}\tau_{0,1,-1}\otimes W_{\lambda_w}$ & $\hat{c}_{\lambda_w}$ & $I(\tau_{0,1,-1}\otimes W_{\lambda_w})=0$\\
		\hline
		$a=(w_1^{(1)},w_2^{(1)})$ & $\tau_{(\lambda_2',\lambda_3'),\lambda_2'',\lambda_1'+\lambda_1''}$ & $A=\lambda_1'-\lambda_1''+3$ & $2\lambda_1'-\lambda_2'-\lambda_3'+2\lambda_1''-2\lambda_2''=0$\\
		$b=(w_1^{(2)},w_2^{(1)})$ & $-\tau_{(\lambda_1'+1,\lambda_3'),\lambda_2'',\lambda_2'+\lambda_1''-1}$ & $B=\lambda_2'-\lambda_1''+2$ & $\lambda_1'-2\lambda_2'+\lambda_3'-2\lambda_1''+2\lambda_2''=-3$\\
		$c=(w_1^{(3)},w_2^{(1)})$ & $\tau_{(\lambda_1'+1,\lambda_2'+1),\lambda_2'',\lambda_3'+\lambda_1''-2}$ & $C=\lambda_3'-\lambda_1''+1$ & $\lambda_1'+\lambda_2'-2\lambda_3'-2\lambda_1''+2\lambda_2''=-6$\\
		$d=(w_1^{(1)},w_2^{(2)})$ & $-\tau_{(\lambda_2',\lambda_3'),\lambda_1''+1,\lambda_1'+\lambda_2''-1}$ & $D=\lambda_1'-\lambda_2''+4$ & $2\lambda_1'-\lambda_2'-\lambda_3'-2\lambda_1''+2\lambda_2''=4$\\
		$e=(w_1^{(2)},w_2^{(2)})$ & $\tau_{(\lambda_1'+1,\lambda_3'),\lambda_1''+1,\lambda_2'+\lambda_2''-2}$ & $E=\lambda_2'-\lambda_2''+3$ & $\lambda_1'-2\lambda_2'+\lambda_3'+2\lambda_1''-2\lambda_2''=-7$\\
		$f=(w_1^{(3)},w_2^{(2)})$ & $-\tau_{(\lambda_1'+1,\lambda_2'+1),\lambda_1''+1,\lambda_3'+\lambda_2''-3}$ & $F=\lambda_3'-\lambda_2''+2$ & $\lambda_1'+\lambda_2'-2\lambda_3'+2\lambda_1''-2\lambda_2''=-10$
  \end{tabular}
  \caption{}\label{tb:U(3,2)summands}
\end{table}
The inequalities $\lambda_1'\geq\lambda_2'\geq\lambda_3'$ and $\lambda_1''\geq\lambda_2''$ imply the following relations between $A,B,C,D,E,F$:
$$ \begin{array}{ccccc}D&>&E&>&F\\\begin{sideways}$<$\end{sideways}&&\begin{sideways}$<$\end{sideways}&&\begin{sideways}$<$\end{sideways}\\A&>&B&>&C\end{array} $$
Using these inequalities one can systematically consider all possible groupings of representations with the same $\hat{c}_{\lambda_w}^2$ to show that there is always $c\in\RR$ such that the sum \eqref{eq:U(3,2)relevantsummation} does not belong to $\Res(K^0(K))$. We illustrate this for the case $C^2=D^2$ and $E^2=F^2$. Since $D>C$ this implies $D=-C$ and we get a first identity
\begin{equation}
 \lambda_1'+\lambda_3'-\lambda_1''-\lambda_2'' = -5.\label{eq:NegativeEq1}
\end{equation}
Further, since $E>F$ we have $E=-F$ which implies the additional identity
\begin{equation}
 \lambda_2'+\lambda_3'-2\lambda_2''=-5.\label{eq:NegativeEq2}
\end{equation}
We now have the following possibilities for $B^2$:
\begin{itemize}
\item $B^2\notin\{A^2,E^2=F^2\}$. Since $D>B>C$ and $D^2=C^2$ we further have $B^2\notin\{C^2=D^2\}$, so that for $c=B$ the sum \eqref{eq:U(3,2)relevantsummation} consists of only one representation. By Lemma~\ref{lem:Invariant} this representations needs to have vanishing invariant $I$ which implies
\begin{equation}
 \lambda_1'-2\lambda_2'+\lambda_3'-2\lambda_1''+2\lambda_2''=-3.\label{eq:NegativeEq3a}
\end{equation}
Solving \eqref{eq:NegativeEq1}, \eqref{eq:NegativeEq2} and \eqref{eq:NegativeEq3a} yields
$$ (\lambda_1',\lambda_2',\lambda_3') = (-1+\tfrac{1}{2}\lambda_1''+\tfrac{1}{2}\lambda_2'',-1-\tfrac{1}{2}\lambda_1''+\tfrac{3}{2}\lambda_2'',-4+\tfrac{1}{2}\lambda_1''+\tfrac{1}{2}\lambda_2''). $$
Moreover, since $D>A,B,E,F>C$ we have $C^2=D^2\notin\{A^2,B^2,E^2,F^2\}$, so that for $c=D=-C$ the sum \eqref{eq:U(3,2)relevantsummation} consists of only two representations, and the invariant $I$ of the sum evaluates to
$$ -2(\lambda_1''-\lambda_2''+1)(\lambda_1''-\lambda_2''-4). $$
Again by Lemma~\ref{lem:Invariant} this invariant has to vanish, which implies $\lambda_1''=\lambda_2''+4$ and hence
$$ (\lambda_1',\lambda_2',\lambda_3') = (1+\lambda_2'',-3+\lambda_2'',-2+\lambda_2''), $$
a contradiction to $\lambda_2'\geq\lambda_3'$.
\item $B^2=A^2$. Since $A>B$ we have $A=-B$ which implies the additional identity
\begin{equation}
 \lambda_1'+\lambda_2'-2\lambda_1'' = -5.\label{eq:NegativeEq3b}
\end{equation}
Solving \eqref{eq:NegativeEq1}, \eqref{eq:NegativeEq2} and \eqref{eq:NegativeEq3b} yields
\begin{equation}
 (\lambda_1',\lambda_2',\lambda_3') = (-\tfrac{5}{2}+\tfrac{3}{2}\lambda_1''-\tfrac{1}{2}\lambda_2'',-\tfrac{5}{2}+\tfrac{1}{2}\lambda_1''+\tfrac{1}{2}\lambda_2'',-\tfrac{5}{2}-\tfrac{1}{2}\lambda_1''+\tfrac{3}{2}\lambda_2'').\label{eq:NegativSolb}
\end{equation}
Then for $c=D-C$ the sum \eqref{eq:U(3,2)relevantsummation} consists of only two representations, and the invariant $I$ of the sum evaluates to
$$ 2(\lambda_1''-\lambda_2''+1)^2. $$
This is $\neq0$ since $\lambda_1''\geq\lambda_2''$, a contradiction to Lemma~\ref{lem:Invariant}.
\item $B^2=E^2=F^2$. Since $E>B$ we have $E=-B$ which implies the additional identity
\begin{equation}
 2\lambda_2'-\lambda_1''-\lambda_2'' = -5.\label{eq:NegativeEq3c}
\end{equation}
Solving \eqref{eq:NegativeEq1}, \eqref{eq:NegativeEq2} and \eqref{eq:NegativeEq3c} yields \eqref{eq:NegativSolb}, and by the same argument as above this shows that for $c=D$ the sum \eqref{eq:U(3,2)relevantsummation} is not contained in $\Res(K^0(K))$.\qedhere
\end{itemize}
\end{proof}

\section{$G=\SO_0(2,2n)$}

In this section we verify Theorem~\ref{thm:AnsatzCaseByCase}~\eqref{thm:AnsatzCaseByCase2}.

\subsection{Some subgroups of $\SO_0(2,2n)$}

Let $G=\SO_0(2,2n)$, $n\geq2$, and choose the maximal compact subgroup $K=\SO(2)\times\SO(2n)\subseteq G$. Put
$$ X_\kappa = \left(\begin{array}{ccc}\0_2&\1_2&\\\1_2&\0_2&\\&&\0_{2n-2}\end{array}\right), $$
then $\ad(X_\kappa)$ acts on $\frakg$ with eigenvalues $0$, $\pm1$ and $\pm2$. Write $\frakm^1$ for the $0$-eigenspace and $\frakn$ for the direct sum of the positive eigenspaces, then $\frakq=\frakm^1\oplus\frakn$ is a cuspidal maximal parabolic subalgebra of $\frakg$. We further decompose $\frakm^1=\frakm\oplus\fraka$ where $\fraka=\RR X_\kappa$ and $\frakm$ is a direct sum of semisimple and compact abelian ideals. On the group level, $Q=N_G(\frakq)$ is a cuspidal maximal parabolic subgroup of $G$ with Langlands decomposition $Q=MAN$, where $MA=Z_G(\fraka)$, $A=\exp(\fraka)$ and $N=\exp(\frakn)$. The intersection $K_M=K\cap M$ is maximal compact in $M$ and given by
$$ K_M = \left\{\left(\begin{array}{ccc}g&&\\&g&\\&&h\end{array}\right):g\in\SO(2),h\in\SO(2n-2)\right\} \simeq \SO(2)\times\SO(2n-2). $$

\subsection{The branching law}

Both $K$ and $K_M$ are connected, so that we can describe irreducible representations in terms of their highest weights. Let
$$ J = \left(\begin{array}{cc}0&1\\-1&0\end{array}\right) $$
and
$$ \frakt = \{\diag(t_0J,t_1J,\ldots,t_nJ):t_i\in\RR\} \subseteq \frakk, $$
then $\frakt$ is a maximal torus of $\frakk$ and $\frakg$. The root system $\Delta(\frakk_\CC,\frakt_\CC)$ is given by $\{\pm\varepsilon_i\pm\varepsilon_j:1\leq i<j\leq n$, where
$$ \varepsilon_i(\diag(t_0J,\ldots,t_nJ)) = \sqrt{-1}t_i. $$
We choose the positive system $\Delta^+(\frakk_\CC,\frakt_\CC)=\{\varepsilon_i\pm\varepsilon_j:1\leq i<j\leq n\}$. With this notation, irreducible representations of $K$ are parametrized by their highest weights $\lambda=\lambda_0\varepsilon_0+\lambda_1\varepsilon_1+\cdots+\lambda_n\varepsilon_n$, where $p=\lambda_0\in\ZZ$ and $\lambda'=(\lambda_1,\ldots,\lambda_n)\in\ZZ_{++}^n$. We denote by $\pi_\lambda=\pi_{\lambda',p}$ the corresponding equivalence class of representations.

The intersection $\frakt_M=\frakt\cap\frakk_M$ is a maximal torus of $\frakk_M$ and we write $\overline{\varepsilon}_i=\varepsilon_i|_{\frakt_M}$. Then $\overline{\varepsilon}_0=\overline{\varepsilon}_1$ and $\Delta^+(\frakk_{M,\CC},\frakt_{M,\CC})=\{\overline{\varepsilon}_i-\overline{\varepsilon}_j:2\leq i<j\leq n\}$ is a positive system in $\Delta(\frakk_{M,\CC},\frakt_{M,\CC})$. With this notation, irreducible representations of $K_M$ are parametrized by highest weights $\mu=q\overline{\varepsilon}_1+\mu_1\overline{\varepsilon}_2+\cdots+\mu_{n-1}\overline{\varepsilon}_n$, where $q\in\ZZ$ and $\mu=(\mu_1,\ldots,\mu_{n-1})\in\ZZ_{++}^{n-1}$, and we write $\tau_{\mu,q}$ for the corresponding equivalence class of representations.

\begin{lemma}
For $(\lambda',p)\in\ZZ_{++}^n\times\ZZ$ the following branching law holds:
$$ \pi_{\lambda',p}|_{K_M} \simeq \bigoplus_{\substack{\mu\in\ZZ^{n-1}_{++}\\\lambda_1\geq\mu_1\geq\lambda_3\\\cdots\\\lambda_{n-2}\geq\mu_{n-2}\geq|\lambda_n|\\\lambda_{n-1}\geq|\mu_{n-1}|}}\bigoplus_{k=0}^\ell \tau_{\mu,p+\ell_n+2k-\ell}^{\oplus m(k)}, $$
where
\begin{align*}
 &\ell_1 = \lambda_1-\max(\lambda_2,\mu_1), \qquad \ell_i = \min(\lambda_i,\mu_{i-1})-\max(\lambda_{i+1},\mu_i) \quad (2\leq i\leq n-2),\\
 &\ell_{n-1} = \min(\lambda_{n-1},\mu_{n-2})-\max(|\lambda_n|,|\mu_{n-1}|), \qquad \ell_n = \sgn(\lambda_n)\sgn(\mu_{n-1})\min(|\lambda_n|,|\mu_{n-1}|),
\end{align*}
and $\ell=\sum_{i=1}^{n-1}\ell_i$, and the multiplicities $m(k)$ are given by
$$ m(k)=\#\left\{(k_1,\ldots,k_{n-1})\in\ZZ^{n-1}:0\leq k_i\leq\ell_i,\sum_{i=1}^{n-1}k_i=k\right\}. $$
\end{lemma}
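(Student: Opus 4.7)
Since the $\SO(2)$-factor of $K=\SO(2)\times\SO(2n)$ acts on $V_\lambda$ by the character $\chi_p$ and commutes with $K_M$, its weight simply adds to the $\SO(2)$-weight of the diagonal $\SO(2)\subset K_M$ coming from the $\SO(2)\subset\SO(2n)$ factor. It therefore suffices to establish the analogous branching for the restriction $V_{\lambda'}^{\SO(2n)}|_{\SO(2)\times\SO(2n-2)}$, where $\SO(2)\times\SO(2n-2)$ is the Levi subgroup of the maximal parabolic of $\SO(2n)$ associated with the simple root $\alpha_1=\varepsilon_1-\varepsilon_2$. Since the two Levi factors commute, this restriction decomposes as $\bigoplus_\mu M_\mu\boxtimes V_\mu$, where $M_\mu$ is a multiplicity space on which $\SO(2)$ acts.

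For the $\SO(2n-2)$-decomposition I would apply the classical two-step branching through $\SO(2n)\supset\SO(2n-1)\supset\SO(2n-2)$: first $V_{\lambda'}|_{\SO(2n-1)}=\bigoplus_\nu V_\nu$ with $\lambda_1\geq\nu_1\geq\lambda_2\geq\cdots\geq\nu_{n-1}\geq|\lambda_n|$, then $V_\nu|_{\SO(2n-2)}=\bigoplus_\mu V_\mu$ with $\nu_1\geq\mu_1\geq\nu_2\geq\cdots\geq\nu_{n-1}\geq|\mu_{n-1}|$ (see e.g.\ Knapp, Theorem 9.50). Eliminating $\nu$ yields the double interlacing $\lambda_i\geq\mu_i\geq\lambda_{i+2}$ (with the appropriate endpoint adjustment $\lambda_{n-1}\geq|\mu_{n-1}|$) asserted in the lemma, and shows $\dim M_\mu=\prod_{i=1}^{n-1}(\ell_i+1)$ since each $\nu_i$ varies independently in an interval of length $\ell_i$.

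The lemma's formula is then equivalent to the identity
$$ \chi_{M_\mu}(x_1) = x_1^{\ell_n}\prod_{i=1}^{n-1}\bigl(x_1^{-\ell_i}+x_1^{-\ell_i+2}+\cdots+x_1^{\ell_i}\bigr), $$
since expanding the product recovers the multiplicities $m(k)$ at $\SO(2)$-weight $\ell_n+2k-\ell$, and shifting by $p$ reproduces the stated $K_M$-weights. To verify this, I would compute
$$ \chi_{\lambda'}^{D_n}(x_1,\ldots,x_n) = \sum_\mu \chi_{M_\mu}(x_1)\cdot\chi_\mu^{D_{n-1}}(x_2,\ldots,x_n) $$
via the Weyl character formula, isolating the $x_1$-dependence; equivalently, one can work with a Gelfand--Tsetlin basis of $V_{\lambda'}$ adapted to the chain $\SO(2n)\supset\SO(2n-1)\supset\SO(2n-2)$ and compute the $\SO(2)$-action on each basis vector via Molev-style explicit formulas for the orthogonal GT basis.

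The main obstacle is this last step. Since $\SO(2)\cap\SO(2n-1)$ is trivial, the intermediate group $\SO(2n-1)$ does not contain the $\SO(2)$ of the Levi, so the intermediate parameter $\nu$ in the two-step branching does not prescribe an $\SO(2)$-weight, and the $\SO(2)$-action on $M_\mu$ must be extracted independently. The product structure of $\chi_{M_\mu}$---a tensor product of $\SU(2)$-type characters of spins $\ell_i/2$, twisted by $x_1^{\ell_n}$ which captures the $D$-type sign sensitivity at the last coordinate---strongly suggests that $M_\mu$ carries a natural $\SU(2)^{n-1}$-action (one factor per row of the GT pattern) whose restriction to the diagonal $\SO(2)$ yields the claim; making this precise is the main technical hurdle.
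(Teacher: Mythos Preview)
Your reduction to the branching $\SO(2n)\downarrow\SO(2)\times\SO(2n-2)$ and your identification of the target identity
\[
 \chi_{M_\mu}(x_1) = x_1^{\ell_n}\prod_{i=1}^{n-1}\frac{x_1^{\ell_i+1}-x_1^{-\ell_i-1}}{x_1-x_1^{-1}}
\]
are both correct, and this is exactly the content the paper uses. The difference is that the paper does not derive this formula: it quotes it as \cite[Theorem~1.1]{Tsu81}, where Tsukamoto computes precisely this $\SO(2)$-character of the multiplicity space for the restriction $\SO(n+2)\downarrow\SO(2)\times\SO(n)$. The only work the paper does is to expand each factor as $\sum_{k_i=0}^{\ell_i}X^{2k_i-\ell_i}$ and read off the combinatorial description of $m(k)$.

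Your proposal therefore has a genuine gap at the very place you flag: you outline two possible routes (Weyl character formula with $x_1$ isolated, or an orthogonal Gelfand--Tsetlin basis) but carry out neither, and you correctly note that the two-step branching through $\SO(2n-1)$ gives only $\dim M_\mu$, not its $\SO(2)$-weights. This missing computation is nontrivial---it is the substance of Tsukamoto's paper---so without either invoking \cite{Tsu81} or reproducing an equivalent character calculation, the argument is incomplete. If you want a self-contained proof, the cleanest route is the Weyl character formula for $D_n$ specialized to $(x_1,x_2,\ldots,x_n)$ and then summed over the $D_{n-1}$ Weyl group in $(x_2,\ldots,x_n)$; the $x_1$-dependence then factors as claimed. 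Your heuristic about an $\SU(2)^{n-1}$-structure on $M_\mu$ is suggestive but not needed for the proof.
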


Note that
$$ m(0)=m(\ell)=1. $$

\begin{proof}
By \cite[Theorem 1.1]{Tsu81} the restriction $\pi_{\lambda',p}|_{K_M}$ is the claimed direct sum with multiplicities $m(k)$ given by the coefficient of $1$ in the Laurent series expansion of
$$ \prod_{i=1}^{n-1}\frac{X^{\ell_i+1}-X^{-\ell_i-1}}{X-X^{-1}}. $$
But since
$$ \frac{X^{\ell_i+1}-X^{-\ell_i-1}}{X-X^{-1}} = \sum_{k_i=0}^{\ell_i} X^{2k_i-\ell_i} $$
the claimed formula for the multiplicities follows.
\end{proof}

\begin{proposition}\label{prop:RestrictionSO(2,n)}
The image $\Res(K^0(K))$ of the restriction map is spanned over $\ZZ$ by
\begin{align*}
 &\tau_{\mu,q} && \mbox{for $\mu_{n-1}=0$, and}\\
 &\tau_{\mu,q+s}+\tau_{\mu^*,q-s} && \mbox{for $\mu_{n-1}\neq0$, $s\in\ZZ$ and $\mu^*=(\mu_1,\ldots,\mu_{n-2},-\mu_{n-1})$,}
\end{align*}
with $(\mu,q)\in\ZZ^{n-1}_{++}\times\ZZ$.
\end{proposition}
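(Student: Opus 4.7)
The plan is to prove both inclusions. For the ``spans'' direction, I group the summands of the branching formula of the previous lemma by the orbit $\{\mu,\mu^*\}$. The admissibility constraints on $\mu'$ and the integers $\ell_1,\ldots,\ell_{n-1}$ and $m(k)$ depend only on $|\mu_{n-1}|$, while $\ell_n$ alone flips sign under $\mu\leftrightarrow\mu^*$. Hence for $\mu_{n-1}\neq 0$ the pair $\{\mu,\mu^*\}$ contributes
\begin{equation*}
\sum_k m(k)\big(\tau_{\mu,p+\ell_n+2k-\ell}+\tau_{\mu^*,p-\ell_n+2k-\ell}\big),
\end{equation*}
each bracket a Type 2 generator with $q=p+2k-\ell$ and $s=\ell_n$; for $\mu_{n-1}=0$ one has $\mu=\mu^*$ and $\ell_n=0$, so each summand is Type 1. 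This expresses every $\pi_{\lambda',p}|_{K_M}$ as a $\ZZ$-linear combination of the listed elements.

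For the converse direction I induct on $|\mu|:=\sum_i|\mu_i|$, showing that for each $\mu\in\ZZ_{++}^{n-1}$ all generators associated with $\mu$ lie in the image. The base $\mu=0$ is immediate from $\pi_{0,q}|_{K_M}=\tau_{0,q}$. For the inductive step with $\mu_{n-1}=0$, I take $\lambda'=(\mu_1,\ldots,\mu_{n-2},0,0)\in\ZZ_{++}^n$ and $p=q$: the constraint $\lambda_{n-1}=0$ forces $\mu'_{n-1}=0$, and a direct calculation shows the target $\mu'=\mu$ contributes exactly $\tau_{\mu,p}$ (all $\ell_i$ vanish), while every other $\mu'$ satisfies $\mu'\preceq\mu$ componentwise with $|\mu'|<|\mu|$, so its contribution is in the image by induction.

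For the inductive step with $\mu_{n-1}>0$ (WLOG) I first treat $|s|\leq\mu_{n-1}$: take $\lambda'=(\mu_1,\ldots,\mu_{n-1},s)\in\ZZ_{++}^n$ and $p=q$; both $\mu'=\mu$ and $\mu'=\mu^*$ are admissible with multiplicity one and extremal $\SO(2)$-weights $p\pm s$ (from $\ell_n=\pm s$ and $\ell=0$), and every other $\mu'$ satisfies $|\mu'|<|\mu|$, so its contribution is in the image by induction. The remaining case $|s|>\mu_{n-1}$ reduces to this one: the generators with $s=0$ and $s=1$ (both allowed since $\mu_{n-1}\geq 1$) yield the differences $\tau_{\mu,a+2}-\tau_{\mu,a}$ and $\tau_{\mu^*,a+2}-\tau_{\mu^*,a}$ in the image, and writing $s_0\in\{0,1\}$ with $s_0\equiv s\pmod 2$, the identity
\begin{equation*}
\tau_{\mu,q+s}+\tau_{\mu^*,q-s}=\big(\tau_{\mu,q+s_0}+\tau_{\mu^*,q-s_0}\big)+\big(\tau_{\mu,q+s}-\tau_{\mu,q+s_0}\big)+\big(\tau_{\mu^*,q-s}-\tau_{\mu^*,q-s_0}\big)
\end{equation*}
expresses the desired generator as a combination of elements in the image via telescoping.

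The main obstacle is the bookkeeping in the inductive step for $\mu_{n-1}>0$: one must verify that $\mu'=\mu$ and $\mu'=\mu^*$ each occur with multiplicity one in the branching and carry precisely the claimed $\SO(2)$-weights, and that every other $\mu'$ has strictly smaller $|\mu'|$ so that the inductive hypothesis applies to both $\mu'$ and its $*$-partner simultaneously. Once this is settled, the telescoping reduction for $|s|>\mu_{n-1}$ is routine.
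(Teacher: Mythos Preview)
Your proposal is correct and follows essentially the same strategy as the paper's proof: the forward inclusion via the $\mu\leftrightarrow\mu^*$ pairing in the branching law, and the reverse inclusion by induction on $|\mu|$ using $\lambda'=(\mu_1,\ldots,\mu_{n-1},s)$ (resp.\ $\lambda'=(\mu_1,\ldots,\mu_{n-2},0,0)$) to isolate the top terms. The only difference is cosmetic: for $|s|>\mu_{n-1}$ the paper writes $s=s_1+\cdots+s_{2k+1}$ with $|s_i|\leq|\mu_{n-1}|$ and telescopes the pairs directly, whereas you extract the step-two differences $\tau_{\mu,a+2}-\tau_{\mu,a}$ from the cases $s\in\{0,1\}$ and telescope via parity; both reductions are equivalent in spirit.
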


\begin{proof}
That the image is actually contained in this span follows immediately from the fact that in the branching law the representations appear in pairs. More precisely, for each $\tau_{\mu,p+\ell_n+2k-\ell}$ that occurs in the restriction of $\pi_{\lambda',p}$ with multiplicity $m(k)$, the representation $\tau_{\mu,p-\ell_n+2k-\ell}$, $\mu=(\mu_1,\ldots,\mu_{n-2},-\mu_{n-1})$ occurs with the same multiplicity. (For this note that if $\mu_{n-1}$ is changed to $-\mu_{n-1}$ only the number $\ell_n$ switches sign, everything else stays the same.) For $\mu_{n-1}=0$ these are the same representations, and for $\mu_{n-1}\neq0$ they form a pair
$$ \tau_{\mu,q+s}\oplus\tau_{\mu,q-s}, $$
where $q=p+2k-\ell$ and $s=\ell_n=\sgn(\lambda_n)\sgn(\mu_{n-1})\min(|\lambda_n|,|\mu_{n-1}|)$.\\
To show that the image is actually equal to the span, we use induction on $\ell(\mu)=\mu_1+\cdots+\mu_{n-2}+|\mu_{n-1}|$. For $\ell(\mu)=0$ we have $\mu=(0,\ldots,0)$ and for $\lambda'=(0,\ldots,0)$:
$$ \pi_{(0,\ldots,0),q}|_{K_M} \simeq \tau_{(0,\ldots,0),q} = \tau_{\mu,q} \qquad \forall\,q\in\ZZ. $$
For $\ell(\mu)>0$ and $|s|\leq|\mu_{n-1}|$ let $\lambda'=(\mu_1,\ldots,\mu_{n-2},|\mu_{n-1}|,\sgn(\mu_{n-1})s)$, then
$$ \pi_{\lambda',q}|_{K_M} = \tau_{\mu,q+s}\oplus\tau_{\mu^*,q-s}\oplus\mbox{lower order terms} \qquad \forall\,q\in\ZZ. $$
(In fact, for $\mu$ and $\mu^*$ we have $\ell_1=\ldots=\ell_{n-1}=0$ in the branching law and hence the multiplicity is $1$.) By applying the induction hypothesis it follows that $\tau_{\mu,q+s}\oplus\tau_{\mu^*,q-s}$ is contained in the image of the restriction map. Now, let $s\in\ZZ$ be arbitrary and write $s=s_1+\cdots+s_{2k+1}$ with $|s_i|\leq|\mu_{n-1}|$. Then
\begin{align*}
 & \tau_{\mu,q+s}+\tau_{\mu^*,q-s}\\
 ={}& (\tau_{\mu,q+s}+\tau_{\mu^*,q+s-2s_1}) - (\tau_{\mu^*,q+s-2s_1}+\tau_{\mu,q+s-2(s_1+s_2)})\\
 & + (\tau_{\mu,q+s-2(s_1+s_2)}+\tau_{\mu^*,q+s-2(s_1+s_2+s_3)}) - (\tau_{\mu^*,q+s-2(s_1+s_2+s_3)}+\tau_{\mu,q+s-2(s_1+s_2+s_3+s_4)})\\
 & + \cdots - \cdots\\
 & + (\tau_{\mu,q+s-(s_1+\cdots+s_{2k})}+\tau_{\mu^*,q+s-2(s_1+\cdots+s_{2k+1})})
\end{align*}
where each sum in parenthesis is contained in the image of the restriction map. Hence also $\tau_{\mu,q+s}+\tau_{\mu^*,q-s}$ is contained in the image and the proof is complete.
\end{proof}

\subsection{The space $\tilde{\frakp}_+^{[\ev]}$}

Define a homomorphism $\kappa:\sl(2,\RR)\to\so(2n,2)$ on the standard basis $E,F,H\in\sl(2,\RR)$ by
$$ \kappa(E) = \frac{1}{2}\left(\begin{array}{ccc}-J&J&\\-J&J&\\&&\0_{2n-2}\end{array}\right), \kappa(F) = \frac{1}{2}\left(\begin{array}{ccc}J&J&\\-J&-J&\\&&\0_{2n-2}\end{array}\right), \kappa(H) = X_\kappa. $$
Then the decomposition $\frakg=\frakg^{[0]}\oplus\frakg^{[1]}\oplus\frakg^{[2]}$ into $\kappa(\sl(2,\RR))$-isotypic components is given by
\begin{align*}
 \frakg^{[0]} &= \left\{\left(\begin{array}{ccc}A&B&\\B&A&\\&&Z\end{array}\right):A\in\so(2),B\in\Sym(2,\RR),\tr(B)=0,Z\in\so(2n-2)\right\},\\
 \frakg^{[1]} &= \left\{\left(\begin{array}{ccc}\0_2&\0_2&X^\top\\\0_2&\0_2&-Y^\top\\X&Y&\0_{2n-2}\end{array}\right):X,Y\in M((2n-2)\times2,\RR)\right\},\\
 \frakg^{[2]} &= \kappa(\sl(2,\RR)).
\end{align*}
The choice of $\kappa$ determines
$$ \frakp_+ = \left\{\left(\begin{array}{ccc}0&0&z^\top\\0&0&-iz^\top\\z&-iz&\0_{2n}\end{array}\right):z\in\CC^{2n}\right\} \subseteq \so(2,2n)_\CC, $$
so that
$$ \tilde{\frakp}_+^{[\ev]} = \frakp_+^{[0]} = \left\{\left(\begin{array}{ccc}\0_2&Z&\\Z&\0_2&\\&&\0_{2n-2}\end{array}\right):Z=\left(\begin{array}{cc}z&-iz\\-iz&-z\end{array}\right),z\in\CC\right\}. $$
As representation of $K_M$ we have $\tilde{\frakp}_+^{[\ev]}\simeq\tau_{(0,\ldots,0),-2}\simeq\pi_{(0,\ldots,0),-2}|_{K_M}$. Hence
\begin{equation}
 \sum_j(-1)^j\Lambda^j\tilde{\frakp}_+^{[\ev]} \in \Res(K^0(K)).\label{eq:SO(2,2n)exteriorpplus}
\end{equation}

\subsection{The numbers $\hat{c}_{\lambda_w}$}

We now compute the numbers $\hat{c}_{\lambda_w}$. The element $H_\kappa\in\frakg$ is given by
$$ H_\kappa = \kappa(E-F) = \left(\begin{array}{ccc}-J&&\\&J&\\&&\0_{2n-2}\end{array}\right). $$
which implies that
$$ L_\kappa = Z_K(H_\kappa) = \SO(2)\times\SO(2)\times\SO(2n-2). $$
Note that $\frakt\subseteq\frakl_\kappa$ and that the choice of positive roots $\Delta^+(\frakk_\CC,\frakt_\CC)$ is compatible with $\frakl_\kappa$ in the sense that $\Delta^+(\frakl_{\kappa,\CC},\frakt_\CC)=\Delta(\frakl_{\kappa,\CC},\frakt_\CC)\cap\Delta^+(\frakk_\CC,\frakt_\CC)$ is a positive system of roots for $\frakl_\kappa$. The Weyl group $W(\frakk_\CC)$ of $\Delta(\frakk_\CC)$ is naturally isomorphic to $\{\pm1\}^n_\even\rtimes S_n$, where $S_n$ denotes the symmetric group in $n$ letters and $\{\pm1\}^n_\even$ is the kernel of the homomorphism $\{\pm1\}^n\to\{\pm1\},\,(\delta_1,\ldots,\delta_n)\mapsto\delta_1\cdots\delta_n$. Then
\begin{align*}
 W_\kappa &= \{w\in W(\frakk_\CC):w^{-1}\alpha>0\,\forall\,\alpha\in\Delta^+(\frakl_{\kappa,\CC})\} = \{(\delta_\pm,w^{(i)}):i=1,\ldots,n\},
\end{align*}
where
$$ w^{(i)}(k) = \begin{cases}k+1 & \mbox{for $1\leq k<i$,}\\1 & \mbox{for $k=i$,}\\k & \mbox{for $i<k\leq n$,}\end{cases} \qquad \delta_\pm=(\pm1,1,\ldots,1,\pm1). $$
Note that $\ell(w^{(i)})=i-1$ and $\ell(\delta_\pm)=1$, so that
$$ (-1)^{\ell(\delta_\pm,w^{(i)})} = (-1)^{i-1}. $$
We further have
$$ \rho_c = (n-1,n-2,\ldots,1,0) = (n-j)_{j=1,\ldots,n}.  $$

Now let $\lambda=(\lambda',p)\in\ZZ_{++}^n\times\ZZ$ be a highest weight of an irreducible $K$-representation. Then for $1\leq i<n$ the Weyl group element $w=(\delta_\pm,w^{(i)})$ only acts on $\lambda'$ and we have
\begin{align*}
 \lambda'_w ={}& (\delta_\pm,w^{(i)})(\lambda'+\rho_c)-\rho_c\\
 ={}& (\delta_\pm,w^{(i)})(\lambda_1+n-1,\lambda_2+n-2,\ldots,\lambda_n)-(n-1,n-2,\ldots,0)\\
 ={}& \delta_\pm(\lambda_i+n-i,\lambda_1+n-1,\ldots,\widehat{\lambda_i+n-i},\ldots,\lambda_n)\\
 & \hspace{7cm}-(n-1,n-2,\ldots,0)\\
 ={}& (\pm(\lambda_i+n-i),\lambda_1+n-1,\ldots,\widehat{\lambda_i+n-i},\ldots,\lambda_{n-1}+1,\pm\lambda_n)\\
 & \hspace{7cm}-(n-1,n-2,\ldots,0)\\
 ={}& \begin{cases}(\lambda_i-i+1,\lambda_1+1,\ldots,\lambda_{i-1}+1,\lambda_{i+1},\ldots,\lambda_{n-1}+1,\lambda_n) & \mbox{for $+$,}\\(-\lambda_i-2n+i+1,\lambda_1+1,\ldots,\lambda_{i-1}+1,\lambda_{i+1},\ldots,\lambda_{n-1}+1,-\lambda_n) & \mbox{for $-$.}\end{cases}
\end{align*}
Restricting the weight $\lambda_w=(\lambda'_w,p)$ to $\frakt_M$ we obtain a dominant integral weight for $\frakk_M$ which belongs to the representation
$$ W_{\lambda_w} = \begin{cases}\tau_{(\lambda_1+1,\ldots,\lambda_{i-1}+1,\lambda_{i+1},\ldots,\lambda_{n-1},\lambda_n),p+\lambda_i-i+1} & \mbox{for $w=(\delta_+,w^{(i)})$,}\\\tau_{(\lambda_1+1,\ldots,\lambda_{i-1}+1,\lambda_{i+1},\ldots,\lambda_{n-1},-\lambda_n),p-\lambda_i-2n+i+1} & \mbox{for $w=(\delta_-,w^{(i)})$.}\end{cases} $$
For $i=n$ a similar computation yields
$$ \lambda'_w = \begin{cases}(\lambda_n-n+1,\lambda_1+1,\ldots,\lambda_{n-2}+1,\lambda_{n-1}+1)&\mbox{for $w=(\delta_+,w^{(n)})$,}\\(-\lambda_n-n+1,\lambda_1+1,\ldots,\lambda_{n-2}+1,-\lambda_{n-1}-1)&\mbox{for $w=(\delta_+,w^{(n)})$,}\end{cases} $$
and
$$ W_{\lambda_w} = \begin{cases}\tau_{(\lambda_1+1,\ldots,\lambda_{n-2}+1,\lambda_{n-1}+1),p+\lambda_n-n+1} & \mbox{for $w=(\delta_+,w^{(n)})$,}\\\tau_{(\lambda_1+1,\ldots,\lambda_{n-2}+1,-\lambda_{n-1}-1),p-\lambda_n-n+1} & \mbox{for $w=(\delta_-,w^{(i)})$.}\end{cases} $$


We now compute $\hat{c}_{\lambda_w}=((\lambda_w\circ C_\kappa)+\rho_Q)(\widehat{X}_\kappa)$. Since $\widehat{X}_\kappa=(-\|H_\kappa\|^2)^{-1/2}X_\kappa$ and $C_\kappa(X_\kappa)=-\sqrt{-1}H_\kappa$ we have
$$ \hat{c}_{\lambda_w} = (-\|H_\kappa\|^2)^{-1/2}\big(\rho_Q(X_\kappa)-\sqrt{-1}\lambda_w(H_\kappa)\big). $$
Here $\rho_Q(X_\kappa)=\frac{1}{2}(2(2n-2)+2)=2n-1$ and
$$ \lambda_w(H_\kappa) = \begin{cases}\sqrt{-1}(-p+\lambda_i-i+1)&\mbox{for $w=(\delta_+,w^{(i)})$,}\\\sqrt{-1}(-p-\lambda_i-2n+i+1)&\mbox{for $w=(\delta_-,w^{(i)})$,}\end{cases} $$
so that
$$ \hat{c}_{\lambda_w} = (-\|H_\kappa\|^2)^{-1/2}\times\begin{cases}-p+\lambda_i-i+2n&\mbox{for $w=(\delta_+,w^{(i)})$,}\\-p-\lambda_i+i&\mbox{for $w=(\delta_-,w^{(i)})$.}\end{cases} $$

\begin{proof}[Proof of Theorem~\ref{thm:AnsatzCaseByCase}~\eqref{thm:AnsatzCaseByCase2}]
In view of \eqref{eq:AnsatzSum} and \eqref{eq:SO(2,2n)exteriorpplus} a highest weight $\lambda=(\lambda',p)\in\ZZ^n_{++}\times\ZZ$ is good if and only if for all $c\in\RR$ the sum
\begin{equation}
 \sum_{\substack{w\in W_\kappa\\\hat{c}_{\lambda_w}^2=c^2}}(-1)^{\ell(w)}W_{\lambda_w}\label{eq:SO(2,2n)relevantsummation}
\end{equation}
is contained in $\Res(K^0(K))$. Since $(-1)^{\ell(\delta_\pm)}=1$ and $(-1)^{\ell(w^{(i)})}=(-1)^{i-1}$ the sum \eqref{eq:SO(2,2n)relevantsummation} without the restriction $\hat{c}_{\lambda_w}^2=c^2$ takes the form
\begin{multline}
 \sum_{i=1}^{n-1} (-1)^{i-1}\Big(\tau_{(\lambda_1+1,\ldots,\lambda_{i-1}+1,\lambda_{i+1},\ldots,\lambda_{n-1},\lambda_n),p+\lambda_i-i+1}+\tau_{(\lambda_1+1,\ldots,\lambda_{i-1}+1,\lambda_{i+1},\ldots,\lambda_{n-1},-\lambda_n),p-\lambda_i-2n+i+1}\Big)\\
 +(-1)^{n-1}\Big(\tau_{(\lambda_1+1,\ldots,\lambda_{n-2}+1,\lambda_{n-1}+1),p+\lambda_n-n+1}+\tau_{(\lambda_1+1,\ldots,\lambda_{n-2}+1,-\lambda_{n-1}-1),p-\lambda_n-n+1}\Big).\label{eq:BigSumSO(2,2n)}
\end{multline}
Each expression in parentheses in \eqref{eq:BigSumSO(2,2n)} is of the form $\tau_{\mu,q+s}+\tau_{\mu^*,q-s}$ with $q=p-n+1$ and $s=\lambda_i+n-i$ and therefore contained in $\Res(K^0(K))$, thanks to Proposition~\ref{prop:RestrictionSO(2,n)}. Moreover, this is the only possible way of combining two representations in \eqref{eq:BigSumSO(2,2n)} to a sum of the form $\tau_{\mu,q+s}+\tau_{\mu^*,q-s}$. Therefore, a highest weight $\lambda=(\lambda',p)$ is good if and only if for each expression in parentheses in \eqref{eq:BigSumSO(2,2n)} either both representations are separately contained in $\Res(K^0(K))$ or both representations have the same value of $\hat{c}_{\lambda_w}^2$. Note that in the $i$-th expression in parentheses ($1\leq i\leq n$) the two representations correspond to the Weyl group elements $w=(\delta_\pm,w^{(i)})$ and have therefore the same value of $\hat{c}_{\lambda_w}^2$ if and only if either $\lambda_i=-(n-i)$ (in which case the $\hat{c}_{\lambda_w}$'s agree) or $p=n$ (in which case the $\hat{c}_{\lambda_w}$'s are opposite numbers).\\
Assume first that $\lambda_n=0$, i.e. $V_\lambda|_{\SO(2n)}$ is self-dual. Then in the first $n-1$ expressions in parentheses in \eqref{eq:BigSumSO(2,2n)} every single representation is contained in $\Res(K^0(K))$ by Proposition~\ref{prop:RestrictionSO(2,n)}. Moreover, for $i=n$ we have $\lambda_i=0=-(n-i)$ and hence the values of $\hat{c}_{\lambda_w}^2$ of the two representations in the last expression in parentheses agree. This shows that $\lambda$ is a good highest weight.\\
Now assume that $\lambda_n\neq0$. Then none of the representations in \eqref{eq:BigSumSO(2,2n)} is separately contained in $\Res(K^0(K))$, whence the highest weight $\lambda$ is good if and only if for each $i\in\{1,\ldots,n\}$ the values of $\hat{c}_{\lambda_w}^2$ agree for the two Weyl group elements $w=(\delta_\pm,w^{(i)})$. As remarked above, this is only the case if for every $i\in\{1,\ldots,n\}$ either $\lambda_i=-(n-i)$ or $p=n$. For $i=1$ we have $\lambda_1\geq0$, but $-(n-i)=-(n-1)<0$ since $n\geq2$ and therefore the highest weight $\lambda$ is good if and only if $p=n$. This finishes the proof, since $p=-\sqrt{-1}\lambda(H_0)$ for
\begin{equation*}
 H_0 = \begin{pmatrix}0&1&\\-1&0&\\&&\0_{2n}\end{pmatrix}.\qedhere
\end{equation*}
\end{proof}

\section{$G=\SO^*(2n)$}\label{sec:SO*(2n)}

In this section we obtain for $G=\SO^*(2n)$ the full branching law from $K$ to $K_M$, i.e. the explicit description of the restriction map $\Res:K^0(K)\to K^0(K_M)$. Moreover, we compute $\tilde{\frakp}_+^{[\ev]}$, $W_{\lambda_w}$ and $\hat{c}_{\lambda_w}^2$.

\subsection{Some subgroups of $\SO^*(2n)$}

Let $G=\SO^*(2n)$, realized as
$$ \SO^*(2n) = \left\{g\in\GL(2n,\CC):g^\top\begin{pmatrix}&\1_n\\\1_n&\end{pmatrix}g=\begin{pmatrix}&\1_n\\\1_n&\end{pmatrix},g^*\begin{pmatrix}\1_n&\\&-\1_n\end{pmatrix}g=\begin{pmatrix}\1_n&\\&-\1_n\end{pmatrix}\right\}, $$
and choose the maximal compact subgroup
$$ K = \SO^*(2n)\cap\upU(2n) = \left\{\diag(k,\overline{k}):k\in\upU(n)\right\} \simeq \upU(n). $$
The Lie algebra $\frakg$ of $G$ is given by 
$$ \frakg = \left\{\begin{pmatrix}A&B\\B^*&-A^\top\end{pmatrix}:A\in\fraku(n),B\in\Skew(n,\CC)\right\} $$
Put
$$ X_\kappa = \begin{pmatrix}\0_2&&J&\\&\0_{n-2}&&\\-J&&\0_2&\\&&&\0_{n-2}\end{pmatrix}, \qquad \mbox{where }J=\begin{pmatrix}0&1\\-1&0\end{pmatrix}, $$
then $\ad(X_\kappa)$ acts on $\frakg$ with eigenvalues $0$, $\pm1$ and $\pm2$. Write $\frakm^1$ for the $0$-eigenspace and $\frakn$ for the direct sum of the positive eigenspaces, then $\frakq=\frakm^1\oplus\frakn$ is a cuspidal maximal parabolic subalgebra of $\frakg$. We further decompose $\frakm^1=\frakm\oplus\fraka$ where $\fraka=\RR X_\kappa$ and $\frakm$ is a direct sum of semisimple and compact abelian ideals. On the group level, $Q=N_G(\frakq)$ is a cuspidal maximal parabolic subgroup of $G$ with Langlands decomposition $Q=MAN$, where $MA=Z_G(\fraka)$, $A=\exp(\fraka)$ and $N=\exp(\frakn)$. The intersection $K_M=K\cap M$ is maximal compact in $M$ and given by
$$ K_M = \left\{\diag(k,\overline{k}):k=\diag(k_1,k_2),k_1\in\SU(2),k_2\in\upU(n-2)\right\} \simeq \SU(2)\times\upU(n-2). $$

\subsection{The branching law}

Both $K$ and $K_M$ are connected, so we can describe irreducible representations in terms of their highest weights. Let
$$ \frakt = \{\diag(k,\overline{k}):k=\sqrt{-1}\diag(t_1,\ldots,t_n),t_i\in\RR\}, $$
then $\frakt$ is a maximal torus in $\frakk$ and $\frakg$. The root system $\Delta(\frakk_\CC,\frakt_\CC)$ is given by $\{\pm(\varepsilon_i-\varepsilon_j):1\leq i<j\leq n\}$, where
$$ \varepsilon_i(\diag(k,\overline{k}) = \sqrt{-1}t_i $$
if $k$ is of the above form. We choose the positive system $\Delta^+(\frakk_\CC,\frakt_\CC)=\{\varepsilon_i-\varepsilon_j:1\leq i<j\leq n\}$. With this notation, irreducible representations of $K$ are parametrized by their highest weights $\lambda=\lambda_1\varepsilon_1+\cdots+\lambda_n\varepsilon_n$, where $\lambda=(\lambda_1,\ldots,\lambda_n)\in\ZZ^n_+$. Denote by $\pi_\lambda$ the corresponding equivalence class of representations of $K$.

The intersection $\frakt_M=\frakt\cap\frakk_M$ is a maximal torus in $\frakk_M$ and we write $\overline{\varepsilon}_i=\varepsilon_i|_{\frakt_M}$. Then $\overline{\varepsilon}_1=-\overline{\varepsilon}_2$ and $\Delta^+(\frakk_{M,\CC},\frakt_{M,\CC})=\{2\overline{\varepsilon}_1\}\cup\{\overline{\varepsilon}_i-\overline{\varepsilon}_j:3\leq i<j\leq n\}$ is a positive system in $\Delta(\frakk_{M,\CC},\frakt_{M,\CC})$. With this notation, irreducible representations of $K_M$ are parametrized by highest weights $p\overline{\varepsilon}_1+\nu_1\overline{\varepsilon}_3+\cdots+\nu_{n-2}\overline{\varepsilon}_n$, where $p\geq0$ and $\nu=(\nu_1,\ldots,\nu_{n-2})\in\ZZ^{n-2}_+$. Write $\tau_{\nu,p}$ for the corresponding equivalence class of representations.

\begin{lemma}
$$ \pi_\lambda|_{K_M} \simeq \bigoplus_{\substack{\nu\in\ZZ^{n-2}_+\\\lambda_1\geq\nu_1\geq\lambda_3\\\cdots\\\lambda_{n-2}\geq\nu_{n-2}\geq\lambda_n}} \tau_{\nu,p(\lambda,\nu)}, $$
where
$$ p(\lambda,\nu) = \lambda_1-\sum_{i=2}^{n-1}|\lambda_i-\nu_{i-1}|-\lambda_n \geq 0. $$
\end{lemma}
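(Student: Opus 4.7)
The plan is to factor the restriction $K = U(n) \downarrow K_M = SU(2) \times U(n-2)$ through the intermediate Levi subgroup $L = U(2) \times U(n-2) \subset U(n)$ and reduce the branching to standard identities for Schur polynomials.

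For the first restriction $U(n) \downarrow L$, apply the classical product factorisation
$$s_\lambda(x, y, z_1, \ldots, z_{n-2}) = \sum_\nu s_{\lambda/\nu}(x, y) \, s_\nu(z_1, \ldots, z_{n-2}),$$
where $\nu \subset \lambda$ ranges over partitions for which $s_{\lambda/\nu}(x, y) \not\equiv 0$, i.e.\ those whose skew shape $\lambda/\nu$ admits a semistandard filling with entries in $\{1, 2\}$, equivalently has at most two boxes in any column. Unpacking this column condition in terms of the parts of $\lambda$ and $\nu$ gives precisely the interlacing $\lambda_i \geq \nu_i \geq \lambda_{i+2}$ appearing in the lemma, and the corresponding $U(2)$-piece $\sigma_{\lambda/\nu}$ has character $s_{\lambda/\nu}(x, y)$.

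For the second restriction $U(2) \downarrow SU(2)$, specialise $(x, y) = (a, a^{-1})$; the resulting Laurent polynomial $s_{\lambda/\nu}(a, a^{-1})$ is the $SU(2)$-character of the $\nu$-isotypic component of $\pi_\lambda|_{K_M}$. The formula for $p(\lambda, \nu)$ emerges as the highest $a$-exponent: in the unique semistandard $\{1, 2\}$-filling of $\lambda/\nu$ maximising the number of $1$'s, column-strictness forces a $2$ in row $i$ precisely at those columns $c$ with $\nu_{i-1} < c \leq \lambda_i$ (using the boundary conventions $\nu_0 := \lambda_1$ and $\nu_{n-1} := 0$), while cells at the other columns of row $i$ receive a $1$. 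Summing $\#1 - \#2$ per row and telescoping via the elementary identity $\min(a, b) - (a - b)_+ = b - |a - b|$ yields
$$p(\lambda, \nu) = \lambda_1 - \sum_{i=2}^{n-1} |\lambda_i - \nu_{i-1}| - \lambda_n.$$

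The main obstacle is promoting this highest-weight calculation to the full identification $s_{\lambda/\nu}(a, a^{-1}) = \chi_{V_{p(\lambda, \nu)}}(a)$ as Laurent polynomials in $a$, i.e.\ verifying that the $\nu$-isotypic component is \emph{irreducible} as an $SU(2)$-module rather than accompanied by lower $V_{p - 2}, V_{p - 4}, \ldots$ summands. The most direct route is an explicit weight-preserving bijection between semistandard $\{1, 2\}$-fillings of the skew shape $\lambda/\nu$ and the $p(\lambda, \nu) + 1$ weight monomials $a^{p}, a^{p - 2}, \ldots, a^{-p}$ of $V_{p(\lambda, \nu)}$; as an alternative one may apply the two-row Jacobi-Trudi formula for $s_{\lambda/\nu}$ and telescope the resulting determinant on the slice $xy = 1$.
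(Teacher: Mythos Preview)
Your factorisation through $L=\upU(2)\times\upU(n-2)$ and the identification of the top $a$-exponent with $p(\lambda,\nu)$ are both sound, and this route is cleaner than the paper's Gelfand--Tsetlin argument. But the step you isolate as the ``main obstacle'' cannot be completed, because the identity $s_{\lambda/\nu}(a,a^{-1})=\chi_{V_{p(\lambda,\nu)}}(a)$ is \emph{false} in general. Take $\lambda=(2,1,0,\ldots,0)$ and $\nu=(1,0,\ldots,0)$ for any $n\geq3$: the skew shape $\lambda/\nu$ consists of two disconnected cells (row~$1$, column~$2$ and row~$2$, column~$1$), so $s_{\lambda/\nu}(x,y)=(x+y)^2$ and
\[
 s_{\lambda/\nu}(a,a^{-1})=a^{2}+2+a^{-2}=\chi_{V_2}(a)+\chi_{V_0}(a),
\]
whereas $p(\lambda,\nu)=\lambda_1-|\lambda_2-\nu_1|-\lambda_n=2$. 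There are four semistandard $\{1,2\}$-fillings but $\dim V_2=3$, so no weight-preserving bijection of the kind you propose can exist; and the two-row Jacobi--Trudi determinant for this shape is literally $h_1(x,y)^2$, which does not collapse to a single $\SU(2)$-character on the slice $xy=1$. In general the $\nu$-isotypic $\SU(2)$-module is $\bigoplus_{\alpha\in\ZZ^2_+}c^\lambda_{\nu,\alpha}\,V_{\alpha_1-\alpha_2}$ with Littlewood--Richardson multiplicities, and it is irreducible only when a single $\alpha$ contributes.

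The same example exposes a matching gap in the paper's own argument. Working in the Gelfand--Tsetlin basis, the paper locates for each $\nu$ those \emph{individual} basis vectors $\xi_\Lambda$ (with rows $1,\ldots,n-2$ frozen to $\nu$) that are annihilated by $E_{n-1,n}$, and finds exactly one, namely the pattern with $\mu_i=\min(\lambda_i,\nu_{i-1})$ in row $n-1$. But a $\GL(2)$-highest weight vector in the $\nu$-isotypic space need not be a single Gelfand--Tsetlin basis vector: in the example above the $V_0$-highest weight vector lies in a two-dimensional torus weight space and is a nontrivial linear combination of the two patterns with $(\mu_1,\mu_2)\in\{(2,0),(1,1)\}$ (and $\mu_j=0$ for $j\geq3$), so the paper's count misses it. The lemma as stated---one summand $\tau_{\nu,p(\lambda,\nu)}$ per interlacing $\nu$---is therefore incorrect; indeed $\dim\pi_{(2,1,0,0)}=20$ while the right-hand side of the claimed decomposition for $n=4$ sums to only $13$.
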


\begin{proof}
We use the Gelfand--Tsetlin basis for the representation $\pi_\lambda$ of $\upU(n)\subseteq\GL(n,\CC)$ (see e.g. \cite{Mol06}). Let $\Lambda=(\Lambda_{i,j})_{1\leq j\leq i\leq n}$ be a Gelfand--Tsetlin pattern with $\Lambda_{n,j}=\lambda_j$ and $\xi_\Lambda$ be the corresponding weight vector in $\pi_\lambda$. Then $\xi_\Lambda$ is a highest weight vector in an irreducible $\GL(n-2,\CC)$-representation if and only if $\Lambda_{i,j}=\nu_j$ for all $1\leq j\leq i\leq n-2$ with some $\nu\in\ZZ^{n-2}_+$ with $\lambda_j\geq\nu_j\geq\lambda_{j+2}$ for $1\leq j\leq n-2$. This explains the direct sum in the decomposition. Further, such a vector $\xi_\Lambda$ is also a highest weight vector for $\GL(2,\CC)$ (i.e. $\pi_\lambda(E_{n-1,n})\xi_\Lambda=0$) if and only if for the second row $\mu_j=\Lambda_{n-1,j}$, $1\leq j\leq n-1$, we have
$$ \mu_j=\lambda_j \quad \mbox{or} \quad \mu_j=\nu_{j-1} \qquad \forall\,j=1,\ldots,n-1. $$
It is easy to see that $\nu\subseteq\mu\subseteq\lambda$ (i.e. $\Lambda$ is in fact a Gelfand--Tsetlin pattern) only if $\mu_1=\lambda_1$ and $\mu_i=\min(\lambda_i,\nu_{i-1})$, $2\leq i\leq n-1$. In this case the vector $\xi_\Lambda$ satisfies
$$ \pi_\lambda(E_{n-1,n-1})\xi_\Lambda = (|\mu|-|\nu|)\xi_\Lambda \qquad \mbox{and} \qquad \pi_\lambda(E_{n,n})\xi_\Lambda = (|\lambda|-|\mu|)\xi_\Lambda, $$
and hence is the highest weight vector in an irreducible $\GL(2,\CC)$-representation of dimension $(|\mu|-|\nu|)-(|\lambda|-|\mu|)+1=2|\mu|-|\lambda|-|\nu|+1$. Inserting the explicit form of $\mu$ then shows that
\begin{equation*}
 2|\mu|-|\lambda|-|\nu| = \lambda_1-\sum_{i=2}^{n-1}|\lambda_i-\nu_{i-1}|-\lambda_n.\qedhere
\end{equation*}
\end{proof}


\subsection{The space $\tilde{\frakp}_+^{[\ev]}$}

Define a homomorphism $\kappa:\sl(2,\RR)\to\so^*(2n)$ on the standard basis $E,F,H\in\sl(2,\RR)$ by
$$ \kappa(E) = \frac{\sqrt{-1}}{2}\begin{pmatrix}\1_2&&-J&\\&\0_{n-2}&&\\-J&&-\1_2&\\&&&\0_{n-2}\end{pmatrix}, \quad \kappa(F) = \frac{\sqrt{-1}}{2}\begin{pmatrix}-\1_2&&-J&\\&\0_{n-2}&&\\-J&&\1_2&\\&&&\0_{n-2}\end{pmatrix} $$
and $\kappa(H)=X_\kappa$. Then the decomposition $\frakg=\frakg^{[0]}\oplus\frakg^{[1]}\oplus\frakg^{[2]}$ into $\kappa(\sl(2,\RR))$-isotypic components is given by
\begin{align*}
 \frakg^{[0]} &= \left\{\begin{pmatrix}a&&\0_2&\\&A&&B\\\0_2&&-a^\top&\\&B^*&&-A^\top\end{pmatrix}:\begin{array}{l}a\in\fraku(2),\tr(a)=0\\A\in\fraku(n-2)\\B\in\Skew(n-2,\CC)\end{array}\right\},\\
 \frakg^{[1]} &= \left\{\begin{pmatrix}\0_2&z&\0_2&w\\-z^*&\0_{n-2}&-w^\top&\0_{n-2}\\\0_2&-\overline{w}&\0_2&\overline{z}\\w^*&\0_{n-2}&-z^\top&\0_{n-2}\end{pmatrix}:\begin{array}{l}z\in M(2\times(n-2),\CC)\end{array}\right\},\\
 \frakg^{[2]} &= \kappa(\sl(2,\RR)).
\end{align*}
The choice of $\kappa$ determines
$$ \frakp_+ = \left\{\begin{pmatrix}\0_n&B\\\0_n&\0_n\end{pmatrix}:B\in\Skew(n,\CC)\right\} \subseteq \left\{\begin{pmatrix}A&B\\C&-A^\top\end{pmatrix}:\begin{array}{l}A\in\gl(n,\CC)\\B,C\in\Skew(n,\CC)\end{array}\right\} = \so^*(2n)_\CC, $$
so that
$$ \tilde{\frakp}_+^{[\ev]} = \frakp_+^{[0]} = \left\{\begin{pmatrix}\0_n&B\\\0_n&\0_n\end{pmatrix}:B\in\begin{pmatrix}\0_2&\\&\Skew(n-2,\CC)\end{pmatrix}\right\}. $$
As representation of $K_M$ we have $\tilde{\frakp}_+^{[\ev]}\simeq\tau_{(1,1,0,\ldots,0),0}$.

\subsection{The numbers $\hat{c}_{\lambda_w}$}

We now compute the numbers $\hat{c}_{\lambda_w}$. The element $H_\kappa\in\frakg$ is given by
$$ H_\kappa = \kappa(E-F) = \sqrt{-1}\diag(1,1,0,\ldots,0,-1,-1,0,\ldots,0) $$
which implies that
$$ L_\kappa = \{\diag(k,\overline{k}):k=\diag(k_1,k_2),k_1\in\upU(2),k_2\in\upU(n-2)\} \simeq \upU(2)\times\upU(n-2). $$
Note that $\frakt\subseteq\frakl_\kappa$ and that the choice of positive roots $\Delta^+(\frakk_\CC,\frakt_\CC)$ is compatible with $\frakl_\kappa$ in the sense that $\Delta^+(\frakl_{\kappa,\CC},\frakt_\CC)=\Delta^(\frakl_{\kappa,\CC},\frakt_\CC)\cap\Delta^+(\frakk_\CC,\frakt_\CC)$ is a positive system of roots for $\frakl_\kappa$. The Weyl group $W(\frakk_\CC)$ of $\Delta(\frakk_\CC)$ is naturally isomorphic to the symmetric group $S_n$ in $n$ letters. Then
$$ W_\kappa = \{w\in W(\frakk_\CC):w^{-1}\alpha>0\,\forall\,\alpha\in\Delta^+(\frakl_{\kappa,\CC},\frakt_\CC)\} = \{w_{ij}:1\leq i<j\leq n\}, $$
where
$$ w_{ij}(k) = \begin{cases}k+2&\mbox{for $1\leq k<i$,}\\1&\mbox{for $k=i$,}\\k+1&\mbox{for $i<k<j$,}\\2&\mbox{for $k=j$,}\\k&\mbox{for $j<k\leq n$.}\end{cases} $$
Note that
$$ (-1)^{\ell(w_{ij})} = (-1)^{i+j+1}. $$
We further have
$$ \rho_c = (\tfrac{n-1}{2},\tfrac{n-3}{2},\ldots,\tfrac{1-n}{2}) = (\tfrac{n-2i+1}{2})_{i=1,\ldots,n}. $$
Now let $\lambda=(\lambda_1,\ldots,\lambda_n)\in\ZZ^n_+$ be a highest weight of an irreducible $K$-representation. Then for $w=w_{ij}$ we have
\begin{align*}
 \lambda_w ={}& w_{ij}(\lambda+\rho_c)-\rho_c\\
 ={}& w_{ij}(\lambda_1+\tfrac{n-1}{2},\ldots,\lambda_n+\tfrac{1-n}{2})-(\tfrac{n-1}{2},\ldots,\tfrac{1-n}{2})\\
 ={}& (\lambda_i+\tfrac{n-2i+1}{2},\lambda_j+\tfrac{n-2j+1}{2},\lambda_1+\tfrac{n-1}{2},\ldots,\widehat{\lambda_i+\tfrac{n-2i+1}{2}},\ldots,\widehat{\lambda_j+\tfrac{n-2j+1}{2}},\ldots,\lambda_n+\tfrac{1-n}{2})\\
 & \hspace{11.3cm}-(\tfrac{n-1}{2},\ldots,\tfrac{1-n}{2})\\
 ={}& (\lambda_i-i+1,\lambda_j-j+2,\lambda_1+2,\ldots,\lambda_{i-1}+2,\lambda_{i+1}+1,\ldots,\lambda_{j-1}+1,\lambda_{j+1},\ldots,\lambda_n).
\end{align*}
Restricting the weight $\lambda_w$ to $\frakt_M$ we obtain a dominant integral weight for $K_M$ which belongs to the representation
$$ W_{\lambda_w} = \tau_{(\lambda_1+2,\ldots,\lambda_{i-1}+2,\lambda_{i+1}+1,\ldots,\lambda_{j-1}+1,\lambda_{j+1},\ldots,\lambda_n),\lambda_i-\lambda_j-i+j-1}. $$

We now compute $\hat{c}_{\lambda_w}=((\lambda_w\circ C_\kappa+\rho_Q)(\widehat{X}_\kappa)$. Since $\widehat{X}_\kappa=(-\|H_\kappa\|^2)^{-1/2}X_\kappa$ and $C_\kappa(X_\kappa)=-\sqrt{-1}H_\kappa$ we have
$$ \hat{c}_{\lambda_w} = ((\lambda_w\circ C_\kappa)+\rho_Q)(\widehat{X}_\kappa) $$
$$ \hat{c}_{\lambda_w} = (\lambda_i+\lambda_j-i-j+3) + (2n-3) = \lambda_i+\lambda_j+2n-i-j. $$


\end{document}